\def\eps{{\varepsilon}}
\newtheorem{theorem}{Theorem}[section]
\newtheorem{lemma}[theorem]{Lemma}
\newtheorem{corollary}[theorem]{Corollary}
\newtheorem{remark}[theorem]{Remark}
\newtheorem{proposition}[theorem]{Proposition}
\newtheorem{definition}[theorem]{Definition}
\begin{document}

\title{On the graphs having at most one  positive eccentricity eigenvalue}

\author{
Sezer Sorgun\thanks{\small Dept. of Mathematics,
Nev\c{s}ehir Hac{\i} Bekta\c{s} Veli University, Turkey}
\\
\small{\tt{srgnrzs@gmail.com}}
\\[5pt]
Hakan Küçük
\footnotemark[1]\\
\small{\tt{hakankucuk1979@gmail.com}}
\\[5pt]
}
\date{}
\maketitle
\begin{abstract}
\noindent
The eccentricity (anti-adjacency) matrix $\eps(G)$ of a graph $G$ is obtained from the distance matrix by retaining the eccentricities in each row and each column. This matrix is first defined in 2018 by Wang et al. \cite{1}.

In this paper we have characterized the graphs which have at most one (hence exactly) positive eigenvalue of $\eps(G)$.
\bigskip

 \noindent
 {\bf Key Words:} Graph, Distance, Eccentricity Matrix, Eccentricity Spectrum, \\
\\
 {\bf 2010 Mathematics Subject Classification:} 05C50

\end{abstract}

\section{Introduction}
In this paper we considered graphs as simple, connected graphs. Let $G=(V(G),E(G))$ be a graph with vertex set $V(G)=\{v_{1},v_{2},\ldots,v_{n}\}$ and edge set $E(G)=\{e_{1},e_{2},\ldots,e_{n}\}$. if two vertices $v_{i}$ and $v_{j}$ are adjacent, then we denoted by $v_{i}\sim v_{j}$ and the edge between two vertices is denoted by $e_{ij}$.
A path in a graph is the any route along the edges of the graph. The $distance$ between the vertices $u,v \in V(G)$, denoted by $d_{G}(u,v)$, is the minimum length of the paths between $u$ and $v$. The \textit{diameter} $diam(G)$ of G is the maximum eccentricity among its vertices and the \textit{radius} $rad(G)$ is the minimum eccentricity of its vertices. Any $u,v \in V(G)$ is \textit{diametral pair} if $d_{G}(u, v) = diam(G)$. A diametral path of a graph is a shortest path whose length is equal to the diameter of the graph.  The \textit{eccentricity} $e(u)$ of the vertex $u$ is defined as $e(u)=max\{d_{G}(u,v):v \in V(G)\}$.  If $diam(G)=rad(G)=d$ than $G$ is known $d$-self centered graph.

The $eccentricity$ $matrix$ $\varepsilon(G)=(\epsilon_{uv})$ of a graph $G$ is defined as follows:\\

\begin{equation}
  \epsilon_{uv}=\begin{cases}
    d_{G}(u,v), & \text{if $d_{G}(u,v)=min\{e(u),e(v)\}$}.\\
    0, & \text{otherwise}.
  \end{cases}
\end{equation}
It is also known as anti adjacency matrix \cite{1}. Since $\eps(G)$ is a symmetric matrix, then eigenvalues of $\varepsilon(G)$ are real. Let $\xi_{k}>\xi_{k-1}>\ldots>\xi_{1}$ be the  distinct $\varepsilon-eigenvalues$. Then the $\varepsilon-spectrum$ can be given as\\
\begin{equation*}
   spec_{\varepsilon}(G)=\big\lbrace\xi_{1}^{(t_1)},\xi_{2}^{(t_2)},\ldots,\xi_{k}^{(t_k)}
   \big\rbrace
\end{equation*}
where $t_{i}$ is the multiplicity of each eigenvalue $\xi_{i} (1\leq i \leq n)$. When $G$ is a connected graph of order $n$, diameter $d=2$ and $\Delta< n-1$,  since $\epsilon_{uv}=2$ if $uv\in E(G)$ and $\epsilon_{uv}=0$ if $uv\notin E(G)$ . Hence,
$$\eps(G) = 2A(\overline{G})$$
where $\overline{G}$ is the complement of $G$ \cite{1}\\
The pineapple graph $K_{p}^{q}$ is the coalescence of the complete graph $K_{p}$ (at any vertex) with the star $K_{1,q}$, at the vertex of degree q.\\
The Kite graph, denoted by $Kite_{p,q}$, is obtained by appending a complete graph with p vertices $K_{p}$ to a pendant vertex of a path graph with q vertices $P_{q}$. For $q=1$, then graph is known as short kite graph.\\
A complete split graph $CS(n,\alpha)$ is a graph on $n$ vertices consisting of a clique on $n-\alpha$ vertices and an independent set on the remaining $\alpha$  vertices $(1 \leq \alpha \leq n-1)$ in which each vertex of the clique is adjacent to each vertex of the independent set.\\

The Windmill graph $W_{m}^{l}$ is the graph obtained by taking  $l$-copies of the complete graph $K_{m}$ with a vertex in common. For $m=3$, the graph is known as Friendship graph $F_{3}^{l}$ (also, Dutch windmill).\\

In literature, there are many graph matrices (are known M-matrix) such as adjacency, Laplacian, signless Laplacian etc. But the eccentricity matrix of a graph is  a quite interesting graph matrix apart from them. The most interesting aspect of the eccentricity matrix of the graphs is that interlacing lemma which is applicable for the other M-matrices and well known in the world of mathematics, cannot be applied. This lemma is very useful for spectral properties of graphs and is almost one of the most important elements. For the graph matrices, the induced subgraph of a graph corresponds to the principal submatrix of the matrix, while for the eccentricity matrix, this feature may not always be present. For instance,
in Figure 1, $H$ is an induced subgraph of $G$. 

\begin{figure}[H]
	\centering
	\begin{subfigure}{.3\textwidth}
		\includegraphics[width=\textwidth]{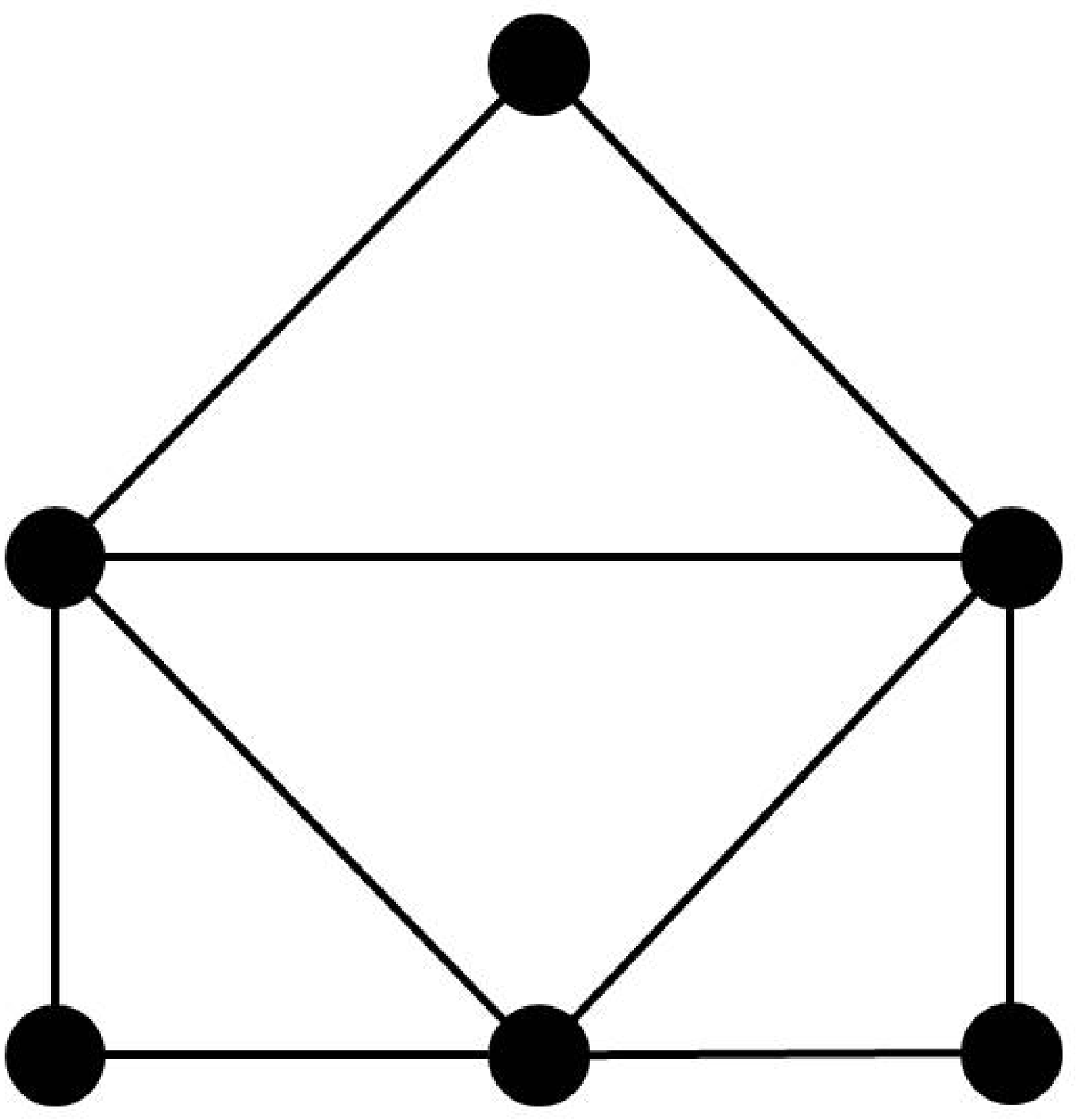}
		\caption{$G$}
        \hfill
	\end{subfigure}
	\begin{subfigure}{.3\textwidth}
		\includegraphics[width=\textwidth]{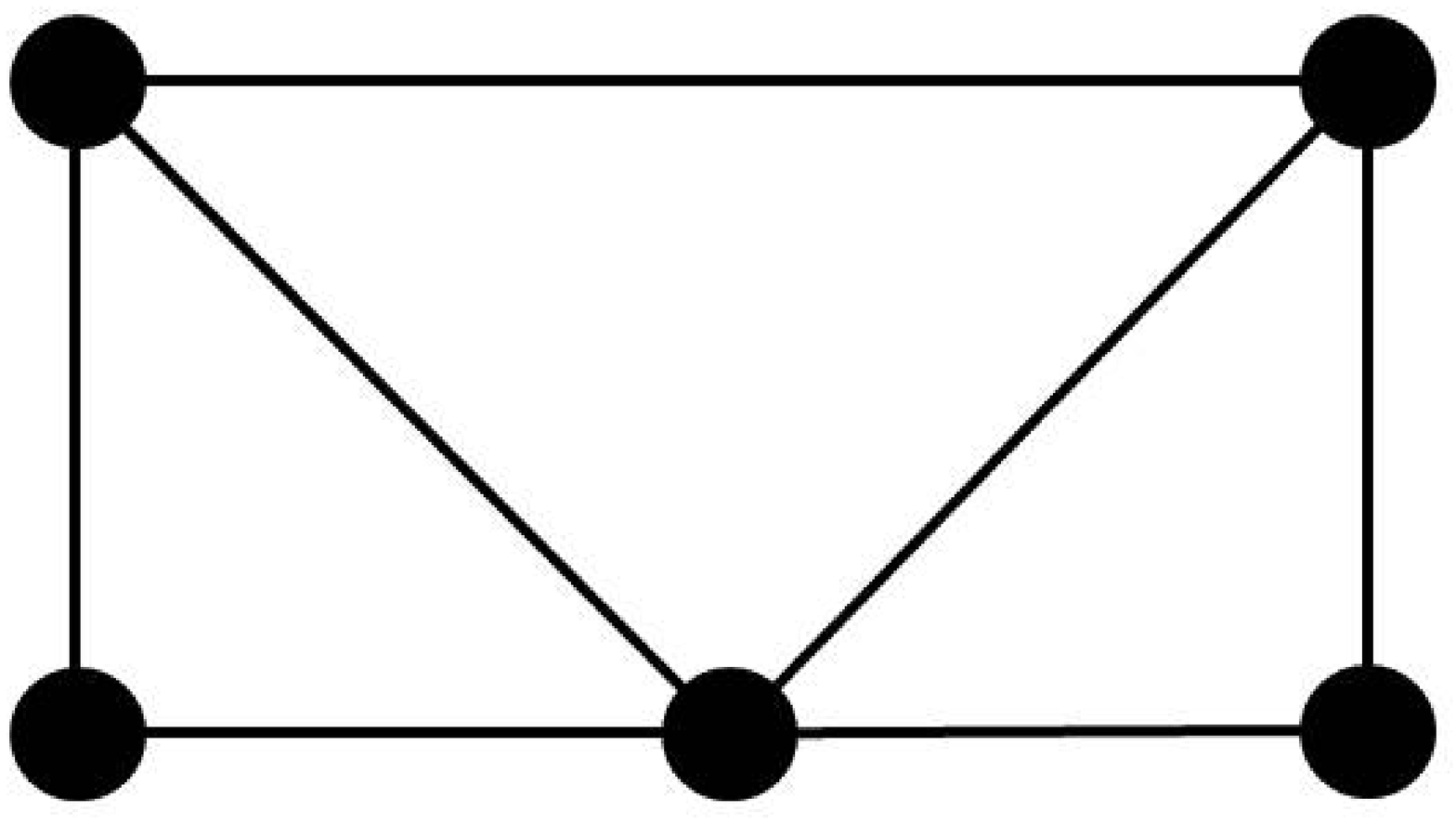}
		\caption{$H$}
        \hfill
	\end{subfigure}
	\caption{}
\end{figure}

But $\eps(H)$ is not a principal submatrix of $\eps(G)$ as seen below.
\[
\eps(G)=\left[
\begin{array}{cccccc}
0 &0&2&2&0&2\\
0 &0&0&2&0&0\\
2 &0&0&0&0&0\\
2 &2&0&0&0&2\\
0 &0&0&0&0&2\\
2 &0&0&2&2&0
\end{array}
\right],\eps(H)=\left[
\begin{array}{ccccc}
0 &0&2&2&1\\
0 &0&0&2&1\\
2 &0&0&0&1\\
2 &2&0&0&1\\
1 &1&1&1&0
\end{array}
\right],
\]\\

Because of the definition of the eccentricity matrix, the entries of it is not deterministic, hence the spectral characterization problem is quite complicated. The matrix is a new matrix and there are a few paper regarding spectral properties. In \cite{8} Henning et.al. give a characterization  graph special graph families with diameter $2$. Wang et. al \cite{1} give some result of $r$-regular graph with diameter $2$ w.r.t. eccentricity matrix. There are some papers regarding $\eps$- spectra of some standart graphs such as star $K_{1,k-1}=S_{k}$, complete multipartite graph $K_{n_1,n_2,\ldots n_k}$,  complete graph $K_n$ etc.(For detail, see \cite{1,3,4}. Also in \cite{10} it is shown that $K_n$  and $K_{n_1,n_2,\ldots,n_k}$ are determined by $\eps$-spectrum. Among all graphs with diameter $d=2$,  $S_{n}$ is one of the graphs which have exactly one positive $\eps$-eigenvalue.

By the motivation of the graph $\eps$-spectra, we have characterized the graphs having exactly one positive $\eps$-eigenvalue.

\section{Main Results}

\begin{lemma} \cite{6}
Let $A\in M_n(\mathbb{R})$ be a symmetric matrix. Let  $B\in M_m(\mathbb{R})$ be a principal submatrix of $A$. Suppose that  $A$ and $B$ has eigenvalues $\lambda_1\leq \lambda_2\leq \cdots \leq\lambda_n$ and  $\beta_1\leq \beta_2\leq \cdots \leq\beta_m$, respectively. Then
$$\lambda_k\leq \beta_k\leq \lambda_{k+n-m}$$ for $k=1,\ldots m$.
\end{lemma}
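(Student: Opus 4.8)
The plan is to derive both inequalities from the Courant--Fischer variational characterization of the eigenvalues of a symmetric matrix, so that the whole argument reduces to comparing Rayleigh quotients over nested families of subspaces. First I would record that permuting the rows and columns of $A$ by one and the same permutation is a similarity transformation and hence leaves the spectrum unchanged; there is therefore no loss of generality in assuming that $B$ is the leading $m\times m$ principal submatrix of $A$, occupying the first $m$ coordinates. I would then fix the isometric embedding $\iota\colon\mathbb{R}^m\to\mathbb{R}^n$ that appends $n-m$ zeros, and note the key identity that for every $y\in\mathbb{R}^m$ one has $(\iota y)^{\top}A\,(\iota y)=y^{\top}B\,y$ and $(\iota y)^{\top}(\iota y)=y^{\top}y$, so the Rayleigh quotient of $y$ with respect to $B$ agrees with that of $\iota y$ with respect to $A$.

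For the lower bound $\lambda_k\le\beta_k$ I would use the min--max form, writing $\beta_k$ as the minimum over $k$-dimensional subspaces $W\subseteq\mathbb{R}^m$ of the maximal Rayleigh quotient of $B$ on $W$, and $\lambda_k$ as the same expression for $A$ over $k$-dimensional subspaces of $\mathbb{R}^n$. Since each $W$ is carried to a $k$-dimensional subspace $\iota(W)\subseteq\mathbb{R}^n$ and the two inner maxima coincide by the identity above, the collection $\{\iota(W)\}$ is a subfamily of all $k$-dimensional subspaces of $\mathbb{R}^n$. Minimizing over a smaller family can only increase the value, which yields $\lambda_k\le\beta_k$.

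For the upper bound $\beta_k\le\lambda_{k+n-m}$ I would run the symmetric argument with the max--min form, expressing $\beta_k$ as the maximum over $(m-k+1)$-dimensional subspaces of $\mathbb{R}^m$ of the minimal Rayleigh quotient, and matching it against the max--min form of the eigenvalue of $A$ of index $n-(m-k+1)+1=k+n-m$. Embedding again realizes every competing subspace inside $\mathbb{R}^n$, so the restricted maximum is at most the unrestricted one, giving the claimed bound.

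Each step is short, and the genuine care lies in the bookkeeping: one must keep the eigenvalues in increasing order, select the correct variational form (min--max for the lower estimate, max--min for the upper), and track the shift $n-m$ so that the dimension $n-(m-k+1)+1$ collapses precisely to $k+n-m$. I expect this index matching to be the only real obstacle. Alternatively, one can avoid it entirely by first proving the single-deletion case $m=n-1$, where the statement reads $\lambda_k\le\beta_k\le\lambda_{k+1}$, and then deleting one row and column at a time, composing the resulting chain of inequalities $n-m$ times.
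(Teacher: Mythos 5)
Your proof is correct: both the Courant--Fischer argument and your alternative of iterating the single-deletion case $m=n-1$ are sound, and the index bookkeeping ($n-(m-k+1)+1=k+n-m$) checks out. Note that the paper offers no proof of its own here --- this is Cauchy's interlacing theorem, cited directly from Horn and Johnson \cite{6} --- and your argument is essentially the standard variational proof given in that reference.
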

\begin{definition}\cite{9}
Let $A$ be partitioned according to ${X_1,\ldots,X_m}$, that is,
\[
A=\left[
\begin{array}{ccc}
A_{1,1} & \cdots &A_{1,n}\\
\vdots & \vdots& \vdots\\
A_{m,1} & \cdots & A_{m,m}
\end{array}
\right],
\]
wherein $A_{i,j}$ denotes the submatrix (block) of $A$ formed by rows in $X_i$ and the columns in $X_j$. Let bi,j denote the average row sum of $A_{i,j}$. Then the matrix $B= (b_{i,j})$ is called the quotient matrix of $A$ w.r.t. the given partition.\\
When the row sum of each block $A_{i,j}$ is constant then the partition is called equitable.
\end{definition}

\begin{lemma}\cite{9}
Let $Q$ be a quotient matrix of any square matrix $A $ corresponding to an equitable partition. Then the spectrum of $A$ contains the spectrum of $Q$ .
\end{lemma}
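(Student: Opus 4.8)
The plan is to produce, for each eigenvalue of $Q$, a matching eigenvector of $A$ with the same eigenvalue, by \emph{lifting} eigenvectors of $Q$ through the characteristic matrix of the partition. First I would introduce the $n\times m$ \emph{characteristic matrix} $S$ whose $j$-th column is the indicator vector of the cell $X_j$: its entry in the row indexed by $v$ and column $j$ equals $1$ if $v\in X_j$ and $0$ otherwise. Since the cells $X_1,\ldots,X_m$ are nonempty and pairwise disjoint, the columns of $S$ are linearly independent, so $S$ has full column rank and $Sx\neq 0$ for every $x\neq 0$.

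The central step I would carry out is the intertwining identity
$$AS=SQ.$$
To verify it, I would fix a row index $v$ lying in some cell $X_i$ and a column index $j$, and compute the $(v,j)$ entry of $AS$ as $\sum_{w\in X_j}A_{vw}$, i.e. the sum of the entries of row $v$ of $A$ over the columns belonging to $X_j$. Equitability says exactly that this row sum is constant across all $v\in X_i$, with common value $b_{ij}$. The corresponding $(v,j)$ entry of $SQ$ is $\sum_{k}S_{vk}\,b_{kj}=b_{ij}$, because $S_{vk}=1$ precisely when $k=i$. Hence the two matrices coincide entrywise.

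Given the identity, the conclusion is immediate: if $Qx=\mu x$ with $x\neq 0$, I would set $y=Sx$, note $y\neq 0$ by the full-rank property of $S$, and read off
$$Ay=(AS)x=(SQ)x=S(\mu x)=\mu y,$$
so $\mu$ is an eigenvalue of $A$. Since $x\mapsto Sx$ is an injective linear map, it embeds the $\mu$-eigenspace of $Q$ into the $\mu$-eigenspace of $A$, so multiplicities are respected as well, giving that the spectrum of $A$ contains the spectrum of $Q$.

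I do not expect a genuine obstacle here, as the entire argument rests on the single identity $AS=SQ$. The one point that demands care is recognizing that equitability is precisely the hypothesis guaranteeing that each block $A_{i,j}$ has \emph{constant} row sum, so that $AS$ can be re-expressed through the quotient matrix $Q$ itself rather than through the merely \emph{averaged} row sums used to define $Q$ in the general (non-equitable) case. I would also remark that the argument uses nothing about symmetry of $A$, so it applies to an arbitrary square matrix, exactly as the statement claims.
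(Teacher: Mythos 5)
Your proof is correct. The paper itself gives no proof of this lemma at all --- it is quoted from Brouwer and Haemers \cite{9} --- and your argument via the characteristic matrix $S$ and the intertwining identity $AS=SQ$ is exactly the standard proof from that source, so there is no divergence of approach to report. One small refinement you may want to add: the eigenspace-embedding step as you wrote it controls \emph{geometric} multiplicities, while the lemma is stated for an arbitrary square matrix $A$, which need not be diagonalizable. To get containment of spectra with \emph{algebraic} multiplicities in that generality, note that $AS=SQ$ gives $(A-\mu I)^k S = S(Q-\mu I)^k$ for every $k$, so the injective map $x\mapsto Sx$ also embeds generalized eigenspaces; equivalently, since $S$ has full column rank, its column space is an $A$-invariant subspace on which $A$ acts as $Q$, whence the characteristic polynomial of $Q$ divides that of $A$. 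In the paper's applications $A$ is the symmetric matrix $\eps(G)$, so the distinction is immaterial there, but it is worth a sentence if you state the lemma for general square matrices as the paper does.
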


\begin{theorem}\cite{zoran}
If $G$ is $d$–self centered, ($d\geq 2$) then any maximal circuit in $G$ consists of at least $2d$ vertices. Also, in the case $d=2$, a maximal circuit has length $4$ if and only if
$G$ is a complete bipartite graph with $2$ vertices in one partition.
\end{theorem}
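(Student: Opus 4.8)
The plan is to treat the two assertions separately, and in both the decisive structural input will be that a $d$-self-centered graph with $d\ge 2$ has no cut vertex. To see this, I would suppose $v$ is a cut vertex and let $A$ be a component of $G-v$ realizing $e(v)$, i.e. $\max_{a\in A}d(v,a)=e(v)$, attained at some $a\in A$. Choosing any vertex $b$ in a different component with $d(v,b)=\beta\ge 1$, every $a$--$b$ path must pass through $v$, so $d(a,b)=d(a,v)+d(v,b)=e(v)+\beta>e(v)$. This forces $e(a)>e(v)$, contradicting the assumption that all eccentricities equal $d$. Hence $G$ is $2$-connected.

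For the first assertion I would then take a diametral pair $u,v$ with $d(u,v)=d$. Since $G$ is $2$-connected, $u$ and $v$ lie on a common cycle $C$, which decomposes into two internally disjoint $u$--$v$ arcs. Each arc is a path joining $u$ and $v$, hence has length at least $d(u,v)=d$, so $|C|\ge 2d$. Because a maximal circuit is a longest cycle, its length is at least $|C|\ge 2d$, which is exactly the claimed bound.

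For the case $d=2$, the easy direction is to verify directly that $K_{2,m}$ with $m\ge 2$ is $2$-self-centered and that, since every cycle must alternate between the two color classes while one class has only two vertices, its longest cycle is a $C_4$. For the converse I would assume $G$ is $2$-self-centered with circumference $4$ and proceed in four steps. First, show $G$ is triangle-free: a triangle, combined with $2$-connectivity and the diameter-$2$ condition, should be pushed to produce either a universal vertex (of eccentricity $1$) or a cycle of length at least $5$, both impossible. Being triangle-free with no cycle of length $\ge 5$ forces $G$ to be bipartite with classes $X,Y$. Next, in a bipartite graph two vertices in opposite classes are at odd distance, so diameter $2$ forces every such pair to be adjacent, i.e. $G$ is complete bipartite. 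Finally, circumference $4$ excludes $K_{3,3}$ (which contains a $C_6$), giving $\min(|X|,|Y|)\le 2$, while $\min(|X|,|Y|)=1$ would produce a universal vertex of eccentricity $1$; hence $\min(|X|,|Y|)=2$ and $G\cong K_{2,m}$.

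I expect the main obstacle to be the triangle-exclusion step in the converse: odd cycles must be ruled out using only the global data that the circumference is $4$ and the diameter is $2$, which requires a careful analysis of how vertices off a fixed longest $4$-cycle $v_1v_2v_3v_4$ attach to it. The useful concrete handle here is that an external vertex adjacent to two \emph{consecutive} cycle vertices, say $v_1$ and $v_2$, immediately yields the $5$-cycle $w v_1 v_4 v_3 v_2 w$, so external adjacencies are confined to an independent set of the $C_4$. Making this extension argument exhaustive---covering every attachment pattern and every potential triangle---is the delicate part; once bipartiteness is secured, the remaining deductions are routine.
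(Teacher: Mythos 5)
The paper itself contains no proof to compare against: Theorem 2.4 is quoted verbatim from \cite{zoran}, and in the sequel (Lemma 2.5) only the weak consequence that a $d$-self-centered graph contains a circuit on at least $2d$ vertices is used. So your attempt has to be judged on its own merits. Most of it is sound: the cut-vertex argument is correct, and since a $d$-self-centered graph with $d\geq 2$ has at least $d+1\geq 3$ vertices it does yield $2$-connectivity; placing a diametral pair on a common cycle correctly bounds the circumference below by $2d$; and for $d=2$ the reduction chain in the converse (triangle-free $\Rightarrow$ bipartite, because circumference $4$ kills all longer odd cycles; bipartite plus diameter $2$ $\Rightarrow$ complete bipartite; circumference $4$ plus self-centeredness $\Rightarrow$ one side of size exactly $2$) is correct, as is the direct verification for $K_{2,m}$. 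One caveat: you silently read ``maximal circuit'' as ``longest circuit.'' If \cite{zoran} intends inclusion-maximal circuits, your argument for the first assertion proves less than is claimed; this reading is, however, exactly the one the present paper relies on, so the discrepancy is harmless in context.

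The genuine gap is the step you yourself flag and postpone: that a $2$-self-centered graph with circumference $4$ contains no triangle. As written this is an announcement, not a proof, and the one concrete device you offer --- a vertex adjacent to two consecutive vertices of a fixed longest $4$-cycle spawns a $C_5$ --- cannot carry it, because a triangle need not meet the chosen $4$-cycle at all; moreover the obvious $2$-connectivity move (two disjoint paths from an outside vertex $w$ to a triangle $T=xyz$) only produces a cycle through $x,w,y,z$ of length $\geq 4$, which does not contradict circumference $4$. Closing the gap requires a case analysis on how outside vertices attach to $T$, driven by the diameter-$2$ fact that nonadjacent vertices have a common neighbour: (a) no vertex is adjacent to all of $x,y,z$, since a $K_4$ plus any further vertex and $2$-connectivity forces a cycle of length $\geq 5$; (b) two outside vertices adjacent to \emph{different} pairs of $T$, or a pair-attached vertex together with a vertex attached only to the third vertex of $T$, always close into a $C_5$ (directly if adjacent, otherwise through their forced common neighbour); (c) two outside vertices attached to single but \emph{distinct} vertices of $T$ likewise yield a $C_5$ through their common neighbour, wherever that neighbour attaches. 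The configurations surviving (a)--(c) force every vertex of $G$ to be adjacent to one fixed vertex of $T$, making it universal, i.e.\ of eccentricity $1$ --- the contradiction you anticipated. So your skeleton is right and the claim is true, but without this (or an equivalent) analysis the converse direction of the $d=2$ statement is unproven.
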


\begin{lemma}
There is no $d$-self centered graph having at most one (hence exactly) positive $\eps$-eigenvalue.
\end{lemma}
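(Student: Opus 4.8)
The plan is to turn $\eps(G)$ into an ordinary adjacency matrix and reduce the claim to a statement about that graph. In a $d$-self centered graph every vertex has eccentricity $d$, so $\min\{e(u),e(v)\}=d$ for all $u,v$; hence $\epsilon_{uv}=d$ precisely when $d_G(u,v)=d$ and $\epsilon_{uv}=0$ otherwise. Letting $A_D$ be the adjacency matrix of the \emph{diametral graph} $G_D$ on $V(G)$, whose edges are the pairs at distance $d$, this reads $\eps(G)=d\,A_D$. Since $d>0$, the matrices $\eps(G)$ and $A_D$ have the same number of positive eigenvalues, so it is enough to prove that $A_D$ has at least two positive eigenvalues. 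I would assume throughout that $d\ge 2$, as the cited circuit theorem does; for $d=1$ one has $G=K_n$, whose eccentricity matrix equals $A(K_n)$ and carries a single positive eigenvalue, so the hypothesis $d\ge 2$ is genuinely needed.

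Two features of $G_D$ drive the argument. First, since $e(v)=d$ for every $v$, each vertex lies at distance $d$ from some other vertex and is therefore non-isolated in $G_D$; thus $G_D$ has an edge and no isolated vertices. Second, I would invoke the classical description of graphs with a unique positive adjacency eigenvalue (Smith): a connected graph has exactly one positive eigenvalue if and only if it is complete multipartite. Combined with the fact that any edge forces a positive eigenvalue by interlacing, this gives that a graph with at most one positive eigenvalue and \emph{no} isolated vertices must be connected and complete multipartite. Here interlacing is applied to the genuine adjacency matrix $A_D$, where principal submatrices really do correspond to induced subgraphs, so the pathology that afflicts $\eps(G)$ does not intervene. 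Consequently, if $A_D$ had at most one positive eigenvalue, then $G_D$ would be complete multipartite.

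The decisive step is to refute this. In a complete multipartite graph non-adjacency is an equivalence relation, and non-adjacency in $G_D$ says exactly that $d_G(u,v)<d$; so ``$d_G(u,v)<d$'' would be a transitive relation. I would then take a diametral pair $u,v$ (so $d_G(u,v)=d$) together with a shortest path $u=x_0,x_1,\dots,x_d=v$: consecutive vertices satisfy $d_G(x_i,x_{i+1})=1<d$, which is where $d\ge 2$ enters, so they are non-adjacent in $G_D$, and transitivity would propagate this along the path to the endpoints, yielding $d_G(u,v)<d$ and contradicting $d_G(u,v)=d$. Hence $G_D$ is not complete multipartite, so $A_D$, having no isolated vertices, must carry at least two positive eigenvalues, and therefore so does $\eps(G)$.

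The step I expect to be the real obstacle is the classification input of the second paragraph: everything else is bookkeeping, but the implication ``at most one positive eigenvalue and no isolated vertices $\Rightarrow$ complete multipartite'' carries the weight. If one prefers a self-contained treatment to citing Smith's theorem, the natural substitute is the forbidden-subgraph form, namely that a graph with at most one positive eigenvalue contains no induced $K_2\cup K_1$. However, producing a second positive eigenvalue directly would require locating an induced $2K_2$ or $P_4$ in $G_D$, and the no-isolated-vertex hypothesis does not supply this for free, since the extra diametral neighbour may close up into a paw rather than into $2K_2$; a short case analysis is then needed. Citing the classification avoids this complication entirely.
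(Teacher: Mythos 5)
Your proof is correct, and it takes a genuinely different route from the paper's. The paper invokes Stanic's result (Theorem 2.4) that every maximal circuit of a $d$-self-centered graph with $d\geq 2$ has length at least $2d$, and from this asserts the existence of two disjoint diametral pairs $\{v_1,v_2\}$, $\{v_3,v_4\}$ whose four cross distances all differ from $d$; this yields the $4\times 4$ principal submatrix in (2), with spectrum $\{d,d,-d,-d\}$, and Cauchy interlacing (Lemma 2.1) then gives at least two positive $\eps$-eigenvalues. You instead write $\eps(G)=d\,A(G_D)$ for the diametral graph $G_D$, reduce the problem to ordinary adjacency spectra, and combine Smith's classification (exactly one positive adjacency eigenvalue $\Leftrightarrow$ complete multipartite, up to isolated vertices) with the transitivity of non-adjacency in a complete multipartite graph, applied along a shortest diametral path. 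Your route costs one classical external theorem the paper never cites, but it buys rigor at precisely the point where the paper is weakest: the existence of two diametral pairs with all cross distances different from $d$ is asserted there without proof, and it can in fact fail --- in $C_5$, a $2$-self-centered graph, every two disjoint diametral pairs contain a cross pair at distance $2$, so the matrix in (2) never occurs as a principal submatrix of $\eps(C_5)$, even though the conclusion holds ($\eps(C_5)=2A(\overline{C_5})$ has three positive eigenvalues). Your argument covers such cases uniformly. A second merit is that you make explicit the hypothesis $d\geq 2$, with $K_n$ as the $d=1$ counterexample; in the paper this restriction enters only implicitly through Theorem 2.4, so Lemma 2.5 as literally stated would be contradicted by $K_n$. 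Note finally that both proofs use interlacing legitimately, on principal submatrices of a fixed symmetric matrix (yours on $A(G_D)$ to force connectivity of $G_D$, the paper's on $\eps(G)$ itself), so neither is affected by the failure, emphasized in the introduction, of the correspondence between induced subgraphs and principal submatrices for eccentricity matrices.
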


\begin{proof}
Let $G$ be a self centered graph with diameter $d$ and let $V=\{v_1,v_2,\ldots, v_n\}$ be the vertex set. From Theorem 2.4, $G$ consists of at least $2d$ circuits. This also guarantees that $G$ has at least two diametral vertex pair such that $\{v_1,v_2\}$ and $\{v_3,v_4\}$. Hence, the eccentricity matrix of $G$  can be blocked as 
	
	\begin{eqnarray*}
	\eps(G)=\left[
	\begin{array}{c|c}
	A_{11} & .\\
	\hline
	.& .
	\end{array}
	\right]
	\end{eqnarray*}
	where 
	\begin{eqnarray}
	\small 
	A_{11}=\left[
	\begin{array}{cccc}
	\small 0& d & 0 & 0\\
	\small d& 0 & 0 & 0\\
	\small 0& 0 & 0 & d\\
	\small 0& 0 & d & 0 
	\end{array}
	\right],
	\end{eqnarray}
From the spectrum of the matrix in (2) and interlacing lemma, we get $\eps(G)$ has at least two positive $\eps$-eigenvalues.

\end{proof}

\begin{lemma}
There is no graph (not self centered) having at most one (hence exactly) positive $\eps$-eigenvalue for $diam(G)\geq 3$.
\end{lemma}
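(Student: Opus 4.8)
The plan is to reduce, via the interlacing Lemma 2.1, to exhibiting a principal submatrix of $\eps(G)$ that already carries two positive eigenvalues. Note that this is legitimate: the pathology pointed out in the introduction concerns submatrices coming from \emph{induced subgraphs}, whereas a principal submatrix of the fixed symmetric matrix $\eps(G)$ is a genuine principal submatrix, so Lemma 2.1 applies verbatim. The first ingredient I would set up is a $4\times 4$ gadget: if four vertices induce, in the zero/nonzero pattern of $\eps(G)$, either a path $P_4$ (vertices $a-b-c-e$ with $\epsilon_{ab},\epsilon_{bc},\epsilon_{ce}\neq 0$ and the three other entries $0$) or two disjoint edges $2K_2$, then the corresponding $4\times4$ block is the (positively) weighted adjacency matrix of a bipartite graph possessing a perfect matching. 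Conjugating by the $\pm 1$ diagonal matrix of the bipartition shows its spectrum is symmetric about $0$, and its determinant is nonzero (for the weighted $P_4$ it equals the square of the product of the two matched weights), so the block is nonsingular and therefore has exactly two positive and two negative eigenvalues. By Lemma 2.1, $\eps(G)$ then has at least two positive eigenvalues.

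Next I would fix a diametral path $w_0 w_1 \cdots w_d$ with $d\ge 3$. Because it is a geodesic, any chord would shorten $w_0 w_d$, so $d_G(w_i,w_j)=|i-j|$ for all $i,j$, and $e(w_0)=e(w_d)=d$. A routine computation (using $d\ge 3$) then gives $\epsilon_{w_0w_1}=\epsilon_{w_{d-1}w_d}=\epsilon_{w_1w_{d-1}}=0$ and $\epsilon_{w_0w_d}=d$, while $\epsilon_{w_0w_{d-1}}\neq 0$ exactly when $e(w_{d-1})=d-1$ and $\epsilon_{w_1w_d}\neq 0$ exactly when $e(w_1)=d-1$. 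Consequently, in the main case $e(w_1)=e(w_{d-1})=d-1$, the quadruple $\{w_{d-1},w_0,w_d,w_1\}$ induces \emph{precisely} the path $w_{d-1}-w_0-w_d-w_1$, a weighted $P_4$ with edge weights $d-1,d,d-1$, and the first ingredient finishes the argument.

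The only genuine obstacle is the opposite case, where a neighbour of a diametral endpoint also attains eccentricity $d$, i.e. $w_1\in S_d$ or $w_{d-1}\in S_d$, writing $S_d=\{v: e(v)=d\}$; here the quadruple above collapses to $P_3\cup K_1$ (or less) and carries only one positive eigenvalue. To handle it I would instead pass to the principal submatrix of $\eps(G)$ indexed by $S_d$. For $u,v\in S_d$ one has $\epsilon_{uv}=d$ when $d_G(u,v)=d$ and $\epsilon_{uv}=0$ otherwise, so this submatrix equals $d\,A(\Gamma)$, where $\Gamma$ is the graph on $S_d$ whose edges are the diametral pairs; moreover $\Gamma$ has no isolated vertex, since every eccentric vertex of a member of $S_d$ again lies in $S_d$. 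The key structural point is that $w_1\in S_d$ \emph{forces} $\Gamma$ not to be complete multipartite: were $\Gamma$ complete multipartite, then $d_G(w_0,w_1)=1<d$ and $d_G(w_1,w_d)=d-1<d$ would place $w_0,w_1,w_d$ in one part, contradicting $d_G(w_0,w_d)=d$. By the classical theorem of Smith (a graph without isolated vertices has a single positive adjacency eigenvalue if and only if it is complete multipartite), $A(\Gamma)$, and hence $d\,A(\Gamma)$, has at least two positive eigenvalues, so Lemma 2.1 again concludes.

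Thus the approach splits cleanly into the $P_4$-gadget case and the $S_d$-submatrix case, and the step I expect to be the crux is exactly recognizing that the seemingly bad configuration $w_1\in S_d$ is incompatible with complete multipartiteness of $\Gamma$, which is what lets the top block $d\,A(\Gamma)$ supply the second positive eigenvalue. If one wishes to avoid invoking Smith's theorem, I would instead extract, from the induced $K_2\cup K_1$ guaranteed by non-complete-multipartiteness together with the absence of isolated vertices, an induced $2K_2$, $P_4$, or triangle-with-a-pendant inside $\Gamma$; each of these has two positive eigenvalues, again reducing to the first ingredient. (It is worth remarking that this reasoning only uses $d\ge 3$ and never the hypothesis that $G$ is not self-centered, the latter serving merely to delimit this lemma from Lemma 2.5.)
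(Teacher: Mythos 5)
Your proof is correct, and its first case is essentially the paper's entire argument: when both endpoint-neighbours $w_1,w_{d-1}$ of a diametral geodesic have eccentricity $d-1$, your weighted-$P_4$ block is exactly the matrix $A'_{11}$ in (3), and both you and the paper conclude from the symmetric (bipartite) spectrum, nonsingularity, and Lemma 2.1. The genuine difference is your second case, and it is not a redundancy — it repairs a real gap in the paper. The paper's matrix (3) tacitly assumes $e(v_3)=e(v_4)=d-1$ for the chosen diametral path; if instead, say, $e(w_{d-1})=d$, then $\epsilon_{w_0w_{d-1}}=0$ because $d_G(w_0,w_{d-1})=d-1\neq\min\{e(w_0),e(w_{d-1})\}=d$, the $4\times 4$ block degenerates to a weighted $P_3\cup K_1$ (or $K_2\cup 2K_1$) with only one positive eigenvalue, and the paper's interlacing step collapses. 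This can actually happen for an arbitrarily labelled diametral path: in $C_6$ with a pendant vertex attached at each of two adjacent cycle-vertices (diameter $4$), one of the two geodesics joining a diametral pair has an endpoint-neighbour of full eccentricity $4$, so the matrix displayed in (3) is simply not a principal submatrix of $\eps(G)$ for that path; the paper neither proves that a ``good'' diametral path always exists nor treats the alternative.

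Your handling of that alternative is sound and genuinely different in method: the principal submatrix of $\eps(G)$ indexed by $S_d$ equals $d\,A(\Gamma)$ for the diametral-pair graph $\Gamma$; $\Gamma$ has no isolated vertices because eccentric vertices of members of $S_d$ again lie in $S_d$; and $w_1\in S_d$ produces in $\Gamma$ the non-edges $w_0w_1$, $w_1w_d$ together with the edge $w_0w_d$, which is incompatible with complete multipartiteness (non-adjacency would have to be transitive). Smith's theorem — or your elementary extraction of an induced $2K_2$, $P_4$, or triangle-with-pendant, each having two positive eigenvalues — then supplies the second positive eigenvalue, and Lemma 2.1 finishes. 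I checked each of these steps, and the two cases are exhaustive, so your argument is complete where the paper's is not; the only cost is importing Smith's characterization, which the paper never needs. Your closing remark is also accurate: your argument uses only $diam(G)\geq 3$ and never non-self-centeredness, so together with Lemma 2.5 it covers all graphs of diameter at least $3$.
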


\begin{proof}
	
 Let $G$ be a not d-self centered graph let $V=\{v_1,v_2,\ldots, v_n\}$ be the vertex set. Then, there is at least one diametral path. Let's label a diametral path $P=v_1v_3\ldots v_4v_2$.  Then, the matrix $\eps(G)$  can be blocked as the following,
	\[
	\eps(G)=\left[
	\begin{array}{c|c}
     A'_{11} & .\\
     \hline
     .& .
	\end{array}
	\right]
	\]
where 	\begin{eqnarray}
\small 
A'_{11}=\left[
\begin{array}{cccc}
\small 0& d & 0 & d-1\\
\small d& 0 & d-1 & 0\\
\small 0& d-1 & 0 & 0\\
\small d-1& 0 & 0 & 0 
\end{array}
\right]
\end{eqnarray}
Since the matrix in  (3) has rank 4 and moreover it can also be  blocked by $2$ by $2$ , they have symmetric non-zero eigenvalues. By Lemma 2.1, we get the matrix $\eps(G)$  has at least two positive $\eps$-eigenvalues. 
\end{proof}

\begin{figure}[ht]
\begin{subfigure}{.5\textwidth}
  \centering
  \includegraphics[width=.8\linewidth]{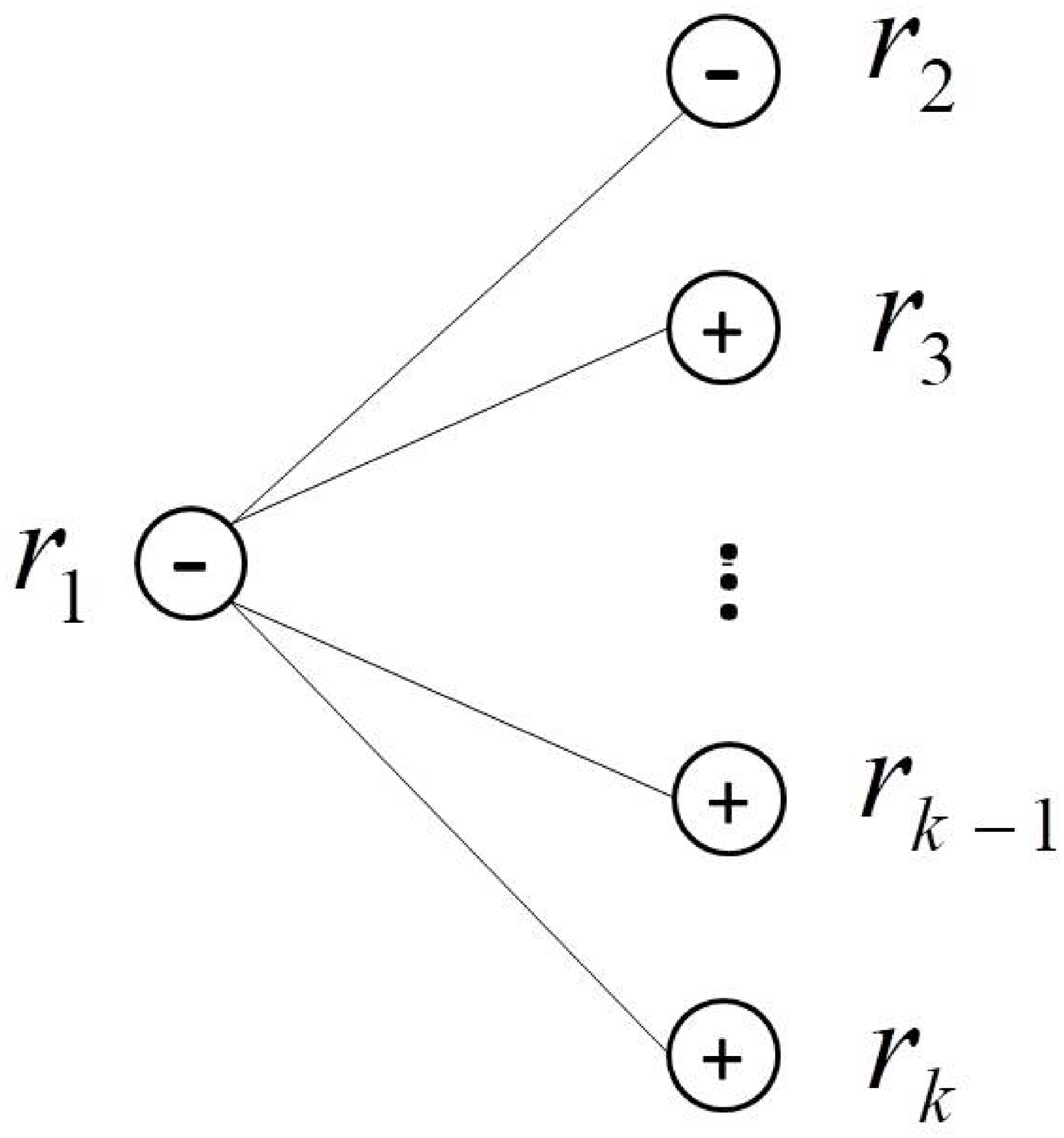}
  \caption{$(-r_1,-r_2,r_3,\ldots,r_k)$}
  \label{fig:sub-first}
\end{subfigure}
\begin{subfigure}{.5\textwidth}
  \centering
  \includegraphics[width=.8\linewidth]{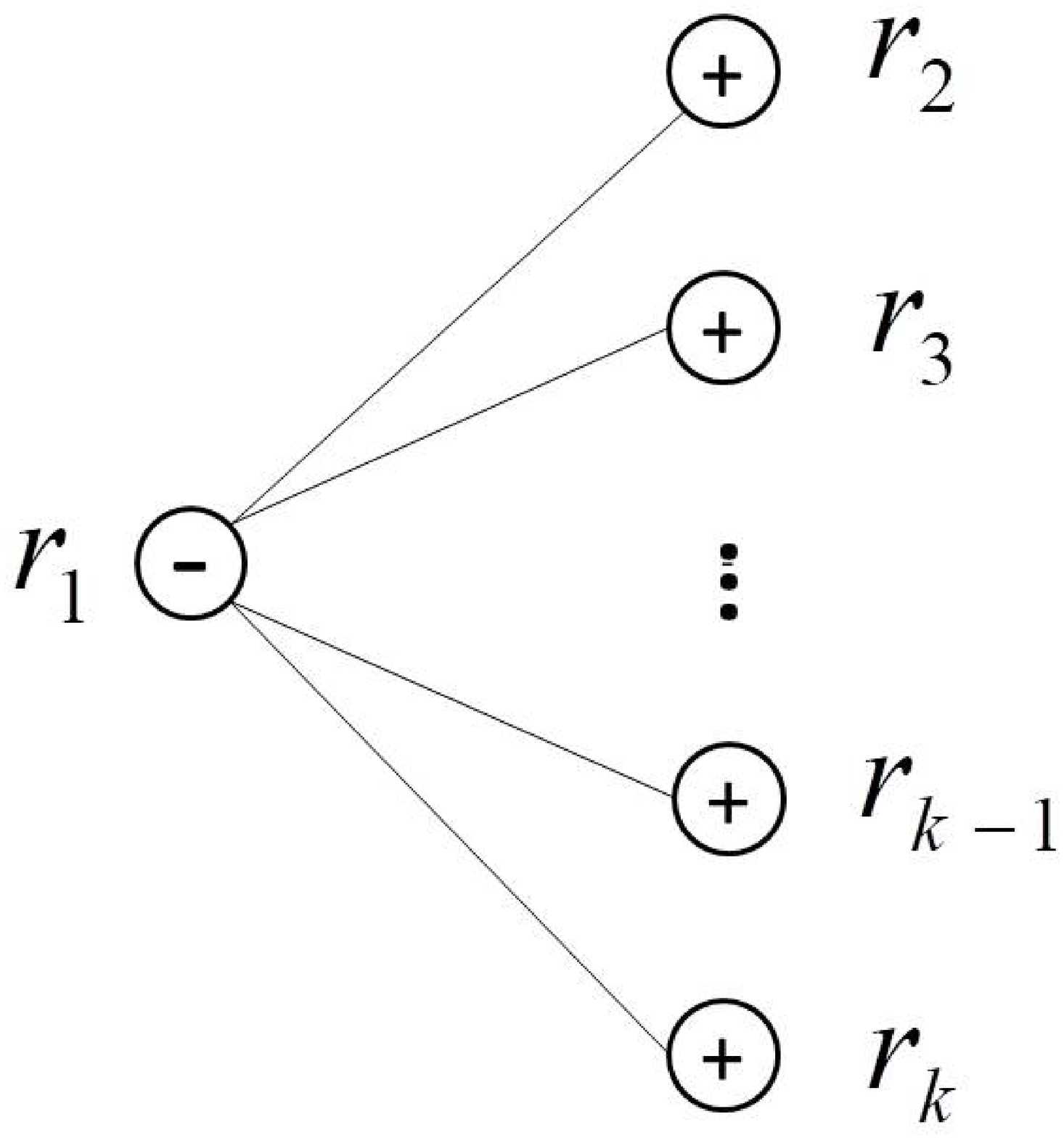}
  \caption{$(-r_1,r_2,r_3,\ldots,r_k)$}
  \label{fig:sub-second}
\end{subfigure}

\begin{subfigure}{.5\textwidth}
  \centering
  \includegraphics[width=.8\linewidth]{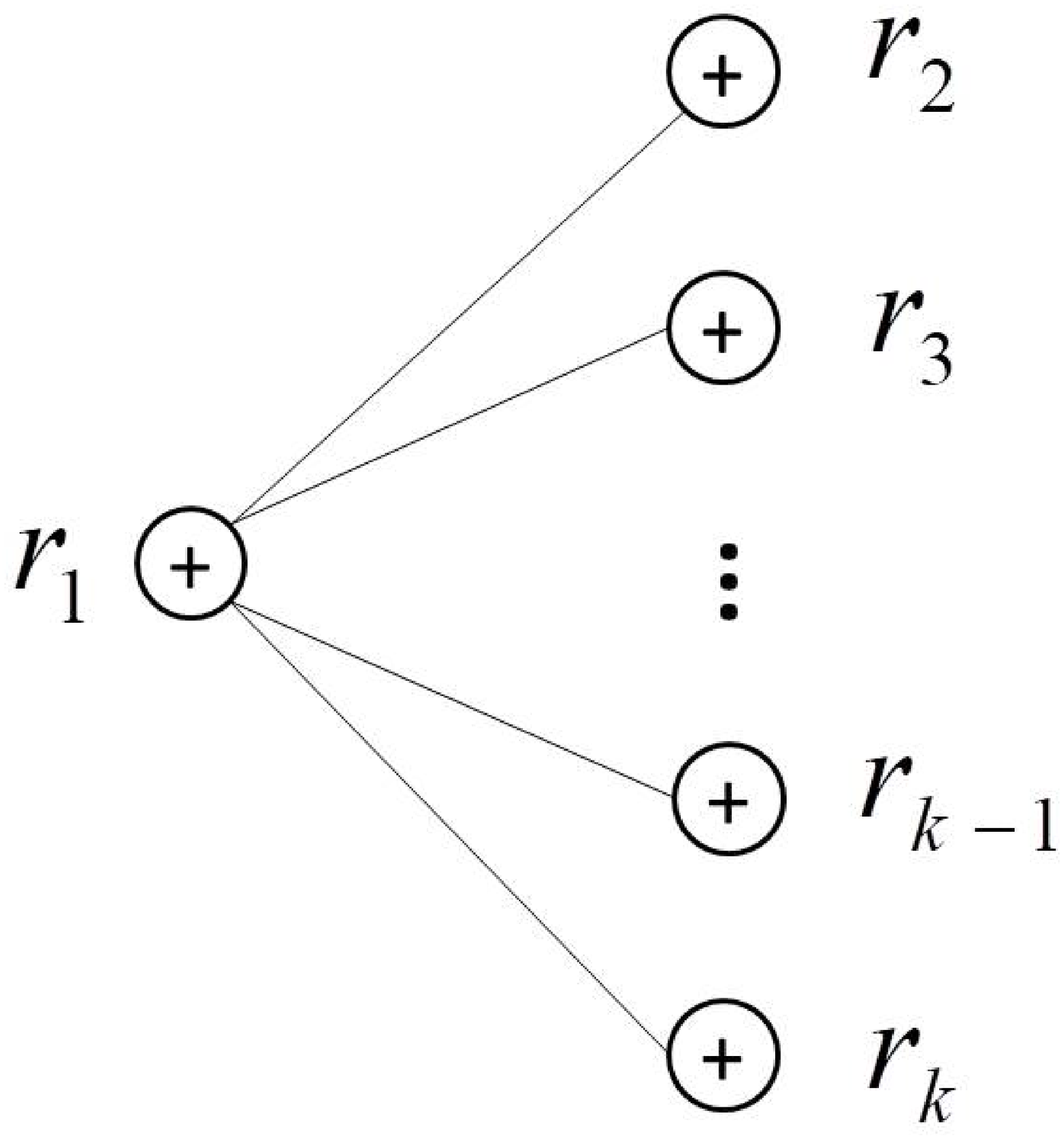}
  \caption{$(r_1,r_2,r_3,\ldots,r_k)$}
  \label{fig:sub-third}
\end{subfigure}
\begin{subfigure}{.5\textwidth}
  \centering
  \includegraphics[width=.8\linewidth]{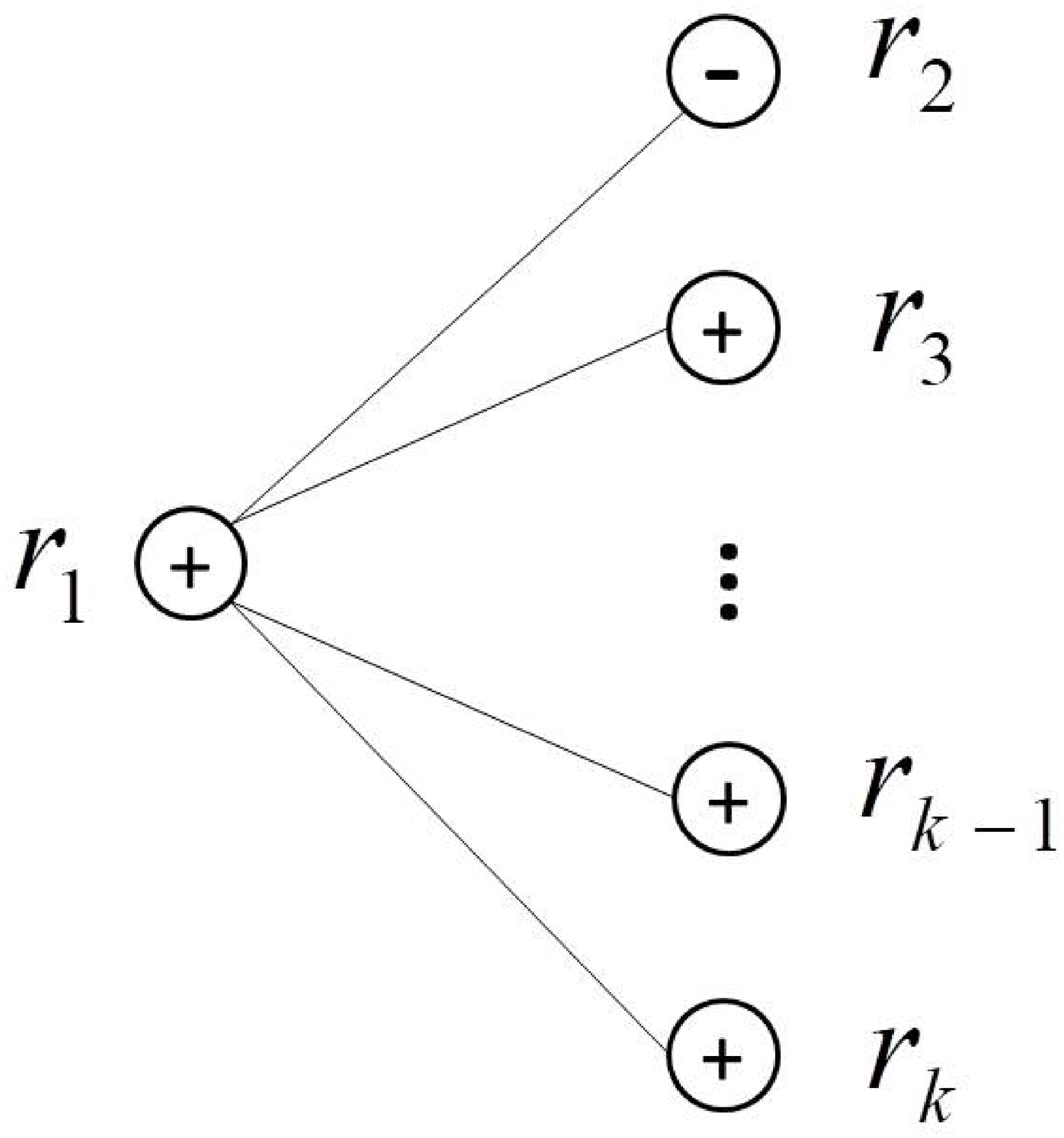}
  \caption{$(r_1,-r_2,r_3,\ldots,r_k)$}
  \label{fig:sub-fourth}
\end{subfigure}
\caption{All possible mixed extension of star $S_{k}$}
\label{fig:fig}
\end{figure}

Let $G$ be any graph with diameter d or $\infty$ and $\mathcal{G}=\{G+u: G \hspace{2mm}\textit{\textit{any graph}} \}$ be family. It is easy to see that all graphs with diameter 2 and $\Delta=n-1$ are in $\mathcal{G}$.
\begin{theorem}
Let $G$ be a graph of order $n$ and $V=\{v_1,v_2,\ldots,v_n\}$. If $diam(G)\geq 3$ or $rad(G)=diam(G)=2$, then $G+v_{n+1}$ has at least two positive $\eps$-eigenvalue.
\end{theorem}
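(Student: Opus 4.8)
The plan is to first make the matrix $\eps(G+v_{n+1})$ completely explicit. Write $G'=G+v_{n+1}$ for the graph obtained by joining the new vertex $v_{n+1}$ to every vertex of $G$, so that $v_{n+1}$ is universal, $diam(G')=2$, and $e_{G'}(v_{n+1})=1$. The decisive first step is to observe that under either hypothesis $G$ has \emph{no} universal vertex: if $diam(G)\ge 3$ a universal vertex would force diameter at most $2$, while if $rad(G)=diam(G)=2$ every vertex already has eccentricity $2$. Hence no $v_i$ with $i\le n$ is universal in $G'$ either, and since each such $v_i$ reaches everything through $v_{n+1}$, we get $e_{G'}(v_i)=2$ for all $i\le n$. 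I would record this eccentricity computation as the foundation of the argument.

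With all eccentricities fixed, the entries of $\eps(G')$ follow directly from the definition $(1)$. For $i,j\le n$ one has $\min\{e(v_i),e(v_j)\}=2$, so the $(v_i,v_j)$ entry is $2$ exactly when $v_i\not\sim v_j$ and $0$ otherwise; for the last row and column $\min\{e(v_{n+1}),e(v_i)\}=1=d_{G'}(v_{n+1},v_i)$, so those entries are all $1$. Thus $\eps(G')$ has the block form $\begin{bmatrix} 2A(\overline{G}) & \mathbf{1}\\ \mathbf{1}^{T} & 0\end{bmatrix}$, where $\mathbf{1}$ is the all-ones vector of length $n$. The point is that, unlike the situation warned about in the introduction, this is an honest $(n+1)\times(n+1)$ symmetric matrix whose genuine principal submatrices obey Lemma 2.1.

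Next I would exhibit a $4\times 4$ principal submatrix with two positive eigenvalues. Because $diam(G)\ge 2$ in both cases, $G$ contains a pair $\{a,b\}$ at distance exactly $2$; let $c$ be a common neighbour of $a$ and $b$. Restricting $\eps(G')$ to $\{a,b,c,v_{n+1}\}$ yields
\[
M=\begin{bmatrix} 0 & 2 & 0 & 1\\ 2 & 0 & 0 & 1\\ 0 & 0 & 0 & 1\\ 1 & 1 & 1 & 0\end{bmatrix},
\]
for which $\mathrm{tr}(M)=0$ and a one-line cofactor expansion gives $\det(M)=4>0$. Four real eigenvalues with zero sum and nonzero product of positive sign must split as two positive and two negative, so $M$ has exactly two positive eigenvalues.

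Finally, since $M$ is a $4\times 4$ principal submatrix of the $(n+1)\times(n+1)$ matrix $\eps(G')$, Lemma 2.1 (with $k=3$, $m=4$) gives $\lambda_{n}(\eps(G'))\ge \beta_{3}(M)>0$; hence the two largest eigenvalues $\lambda_n,\lambda_{n+1}$ of $\eps(G')$ are positive, which is the claim. The main obstacle, and the only place the hypothesis is genuinely used, is the first step: pinning down that every original vertex acquires eccentricity exactly $2$ in the cone, which is precisely what makes the top-left block equal $2A(\overline{G})$ and fixes the $1$'s and $2$'s in $M$. Once the eccentricities are settled, the rest is a determinant and interlacing. (Alternatively, one could bypass $v_{n+1}$ and note that for such $G$ the complement $\overline{G}$ is never complete multipartite, so by Smith's theorem $2A(\overline{G})$ already has two positive eigenvalues; but the explicit $4\times 4$ submatrix above is more elementary and self-contained.)
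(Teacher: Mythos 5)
Your proof is correct, and while it rests on the same basic tool as the paper's (Lemma 2.1 applied to a small principal submatrix of $\eps(G+v_{n+1})$), the execution is genuinely different and, as it happens, sounder. The paper argues by cases: for $rad(G)=diam(G)=2$ it observes that $\eps(G)$ itself sits as a principal submatrix of $\eps(G+v_{n+1})$ and combines Lemma 2.5 with interlacing; for $diam(G)\ge 3$ it interlaces with a $4\times 4$ block $U$ indexed by four vertices of a diametral path. You instead pin down $\eps(G+v_{n+1})$ completely as $\bigl[\begin{smallmatrix}2A(\overline{G}) & \mathbf{1}\\ \mathbf{1}^{T} & 0\end{smallmatrix}\bigr]$ and use a single submatrix, indexed by a distance-two pair $a,b$, a common neighbour $c$, and the apex, which covers both hypotheses at once (the hypotheses enter only to rule out a universal vertex of $G$, exactly as you say). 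This choice buys something real: in the paper's second case, when $diam(G)\ge 4$ the matrix $U$ is twice the adjacency matrix of $C_4$, with spectrum $\{4,0,0,-4\}$ --- rank $2$, not $4$, and only \emph{one} positive eigenvalue --- so the paper's claim about $U$ fails there and interlacing with it delivers only one positive eigenvalue of $\eps(G+v_{n+1})$. Your $M$, which crucially includes the apex row of $1$'s, genuinely has $\mathrm{tr}(M)=0$ and $\det(M)=4>0$, forcing the sign pattern $(+,+,-,-)$, so your unified argument also repairs the paper's gap for diameters at least $4$. Two minor points: the existence of the pair $\{a,b\}$ at distance exactly $2$ deserves its one-line justification (take two vertices at distance $2$ along a diametral path), and the closing aside via Smith's theorem, while correct, imports a result the paper never states; the explicit $4\times 4$ computation is the right, self-contained choice.
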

\begin{proof}
	Assume that $diam(G)=rad(G)=2$. Then the eccentricity matrix of $G+v_{n+1}$
	\begin{eqnarray}
	\eps(G+v_{n+1})=\left[
	\begin{array}{cc}
	\eps(G) &J \\
	J &0
	\end{array}
	\right],
	\end{eqnarray}\\
	where $J$ is all 1 matrix by 1 by n. From Lemma 2.5 and Lemma 2.1 we get $G+v_{n+1}$ has at least two positive $\eps$-eigenvalue.
	\vspace{2mm}
	Now let $diam(G) \geq 3$. Let's label the diametral path as  $v_1v_3\ldots v_4v_2$. So we get
	
	\[
	\eps(G+v_{n+1})=\left[
	\begin{array}{c|c}
	U & .\\
	\hline
	.& .
	\end{array}
	\right]
	\]\\

where \small \[
U=\left[
\begin{array}{cccc}
\small 0& 2 & 0 & 2\\
\small 2& 0 & 2 & 0\\
\small 0& 2 & 0 & 0\\
\small 2& 0 & 0 & 0 
\end{array} 
\right] \textit{or} 	 \ U=\left[
\begin{array}{cccc}
\small 0& 2 & 0 & 2\\
\small 2& 0 & 2 & 0\\
\small 0& 2 & 0 & 2\\
\small 2& 0 & 2 & 0 
\end{array}
\right]\]

Since the matrix $U$ has rank 4 and also they can be  blocked by 2 by 2. It has symmetric non-zero eigenvalues. This fact guarantees that the matrix $U$ has exactly two positive eigenvalues. From Lemma 2.1, we get the matrix $\eps(G)$  has at least two positive $\eps$-eigenvalues. 
\end{proof}

A mixed extension of a graph $G$ is a graph $H$ obtained from $G$ by replacing each vertex of $G$ by a clique or a coclique, whilst two vertices in $H$ corresponding to distinct vertices $x$ and $y$ of $G$ are adjacent whenever $x$ and $y$ are adjacent in $G$ \cite{5,7,11}. A mixed extension of star $S_k$ is represented by an $k$-tuple $(\pm r_1,\ldots \pm r_k)$ of nonzero integers, where $+r_i $ (simply $r_i$) indicates that $V_{i}$ is a clique of order $r_i$ and $-r_i $ means that $V_{i}$ is a coclique of order $r_i$. Without loss of generality, let  $r_1$ be representation of the center vertex of $S_{k}$. Therefore the possible types of mixed extension can be seen in Fig 1.\\ 

\begin{proposition}
Let $H$ be a mixed extension of $S_{2}$ of any type . Then $H$ has an exactly one positive $\eps$-eigenvalue if and only if the type must be one of the following:

\begin{enumerate}[label=(\roman*)]
\item $(-2,r_{2})$  for all $r_2\geq 1$
\item $(-r_1,2)$  for all $r_1\geq 2$
\item $(-3,4)$ or $(-4,3)$ or $(-3,3)$
\item $(r_1,r_{2})$ for all $r_1,r_2\geq1$
\end{enumerate}

\end{proposition}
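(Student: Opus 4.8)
The plan is to reduce the four sign-patterns $(\pm r_1,\pm r_2)$ to three concrete graph families and then compute the $\eps$-spectrum of each explicitly. Because $S_2=K_{1,1}$ is symmetric in its two vertices, the types $(-r_1,r_2)$ and $(r_2,-r_1)$ produce isomorphic graphs, so up to isomorphism there are only three cases: $(r_1,r_2)$ yields the complete graph $K_{r_1+r_2}$; $(-r_1,-r_2)$ yields the complete bipartite graph $K_{r_1,r_2}$; and the mixed type (one clique and one coclique) yields a complete split graph $CS(r_1+r_2,\alpha)$ whose independent set is the coclique part and whose clique is the clique part. I would treat these three families separately, and in each case the strategy is to exhibit all but two eigenvalues by explicit eigenvectors and to capture the remaining two through an equitable quotient matrix.

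For the complete graph, $\eps(K_n)=J-I$ has spectrum $\{(n-1)^{(1)},(-1)^{(n-1)}\}$, giving exactly one positive eigenvalue; this is case (iv), which therefore always succeeds. For $K_{r_1,r_2}$ with $r_1,r_2\ge 2$ every vertex has eccentricity $2$, so the graph is $2$-self centered and Lemma 2.5 forces at least two positive $\eps$-eigenvalues; hence such types are excluded, and the only admissible complete bipartite graphs are the degenerate ones with $\min\{r_1,r_2\}=1$, i.e.\ stars.

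The heart of the argument is the complete split graph. Writing $q$ for the clique size and $s$ for the independent-set size (with $s\ge 2$ for a genuine split graph), the clique vertices are universal and so have eccentricity $1$, while the independent vertices have eccentricity $2$; this gives the block form
\[
\eps=\begin{bmatrix} J_q-I_q & J_{q\times s}\\ J_{s\times q} & 2(J_s-I_s)\end{bmatrix}.
\]
Vectors supported on the clique part and summing to zero are eigenvectors for $-1$ (multiplicity $q-1$), and vectors supported on the independent part and summing to zero are eigenvectors for $-2$ (multiplicity $s-1$); these, together with the two eigenvalues coming from the equitable quotient matrix
\[
B=\begin{bmatrix} q-1 & s\\ q & 2(s-1)\end{bmatrix}
\]
via Lemma 2.3, exhaust all $n=q+s$ eigenvalues. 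Since the explicit eigenvalues $-1,-2$ are negative, the number of positive $\eps$-eigenvalues equals the number of positive eigenvalues of $B$. As $\operatorname{tr}B=q+2s-3>0$, the matrix $B$ always has at least one positive eigenvalue, and it has exactly one precisely when $\det B\le 0$. Computing $\det B=qs-2q-2s+2=(q-2)(s-2)-2$, the condition for exactly one positive $\eps$-eigenvalue becomes $(q-2)(s-2)\le 2$.

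Finally I would solve this inequality over the integers: $s=2$ (any $q$) and $q=2$ (any $s$) always satisfy it, giving the infinite families (i) and (ii), while for $q,s\ge 3$ the only solutions are $(q,s)\in\{(3,3),(3,4),(4,3)\}$, giving (iii); merging with (iv) recovers the stated list. I expect the main obstacle to be the careful bookkeeping rather than any single hard step: one must verify that the displayed zero-sum vectors are genuine eigenvectors and that, being orthogonal to the constant-on-parts subspace, they exhaust the complement of the quotient eigenspace (so the $2\times 2$ reduction really captures every positive eigenvalue), keep straight which part is clique and which is coclique under the two orderings, and handle the boundary cases $q=1$ and $s=1$, where the split graph degenerates into a star or a complete graph and must be reconciled with the listed families.
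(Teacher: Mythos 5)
Your proposal is correct and follows essentially the same route as the paper: split into the three isomorphism classes ($K_{r_1+r_2}$, $K_{r_1,r_2}$, and the complete split graph for the mixed type), then settle the split-graph case via the equitable quotient of the block eccentricity matrix and the sign of its determinant. Your $\det B=qs-2q-2s+2$ is exactly the product of the roots of the quadratic factor $h(x)$ in the paper's characteristic polynomial $(x+2)^{r_1-1}(x+1)^{r_2-1}h(x)$. If anything, your version is more complete than the paper's: you exhibit the zero-sum eigenvectors explicitly, you supply the trace argument $\operatorname{tr}B=q+2s-3>0$ that is needed (and tacitly assumed by the paper) to pass from ``product of roots $\le 0$'' to ``exactly one positive eigenvalue,'' and you dispose of non-star $K_{r_1,r_2}$ by the self-centered Lemma 2.5 rather than by quoting its known spectrum.

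One remark on the boundary case you defer at the end: when the clique part has size $q=1$ the split graph is the star $K_{1,s}$, and your criterion $(q-2)(s-2)\le 2$ then holds for every $s$, so every star has exactly one positive $\eps$-eigenvalue (consistent with the paper's introduction). But for $s\ge 3$ the corresponding type $(-s,1)$ appears in none of the items (i)--(iv), so the ``reconciliation with the listed families'' you postpone cannot actually be carried out: the proposition's list is incomplete. The same blind spot sits in the paper's own proof, whose opening claim that every $(-r_1,-r_2)\cong K_{r_1,r_2}$ has two positive $\eps$-eigenvalues fails when $r_2=1$. In other words, the one unresolved step in your write-up points to a defect of the statement being proved, not of your argument.
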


\begin{proof}

Let $H=(-r_1,-r_2)$. Then $H\cong K_{r_1,r_2}$. From the spectrum of $K_{r_1,r_2}$, $H$ has two positive $\eps$-eigenvalue. Hence $H$ can be type of $(r_1,r_2)$;$(-r_1,r_2)$ \\

Assume that $H=(r_1,r_2)$. Then it is easy to see that $H\cong K_{r_1+r_2}$. Hence $H$ has exactly one positive $\eps$-eigenvalue because $H$ has rank 1.\\

Now let $H=(-r_{1},r_{2})$. The eccentricity block matrix of $H$ is\\
\[
\eps(H)=\left[
\begin{array}{cc}
2(J-I)_{r_1\times r_1} &J_{r_{1}\times r_{2}}\\
J &  (J-I)
\end{array}
\right],
\]\\
Since the given partitions of the matrix $H$ are equitable, hence from the quotient matrix, we get the characteristic polynomial of the mixed extension of types as the following:

\begin{equation*}
p_{\eps}(H,x)=(x+2)^{r_{1}-1}(x+1)^{r_{2}-1}h(x)
\end{equation*}
where $h(x)=x^2-(2r_1+r_2-3)x+r_1r_2-2r_1-2r_2+2$. From the product of the roots (say $\alpha_1$ and $\alpha_2$) of polynomial $h(x)$, we have $\alpha_1\alpha_2=r_1r_2-2r_1-2r_2+2\leq 0$ if and only if $r_1r_2+2\leq 2r_1+2r_2$ if and only if $(-r_1,r_2)\in \{(-2,r_2),(-3,4),(-4,3),(-3,3),(-r_1,2)\}$.
\end{proof}

\begin{proposition}
Let $H$ be a mixed extension of $S_{3}$ of any type . Then $H$ has an exactly one positive $\eps$-eigenvalue if and only if the type must be one of the following:

\begin{enumerate}[label=(\roman*)]
\item $(r_1,r_2,r_3)$  for all $r_i\geq 1$
\item $(r_1,-r_2,r_3)$  for all $r_2\geq 2$,$r_1\leq2$ or $(r_1,r_2)\in\{(3,2),(4,2),(3,3)\}$
\end{enumerate}

\end{proposition}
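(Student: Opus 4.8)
The plan is to run all eight sign patterns $(\pm r_1,\pm r_2,\pm r_3)$ of the mixed extension through one device. In every case $V(H)$ splits into the three parts $V_1,V_2,V_3$ coming from the centre and the two leaves of $S_3$, and this partition is equitable for $\eps(H)$; hence by Lemma 2.3 the spectrum of the associated $3\times 3$ quotient matrix $Q$ is contained in the spectrum of $\eps(H)$. First I would record the metric data. Each such $H$ has diameter $2$; a vertex lying in a leaf part always has eccentricity $2$, whereas a vertex of the centre $V_1$ has eccentricity $1$ if $V_1$ is a clique (or $r_1=1$) and eccentricity $2$ if $V_1$ is a coclique with $r_1\ge 2$. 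Reading $\eps(H)$ off block by block, the vectors summing to zero on each part produce the ``internal'' eigenvalues: $-1$ with multiplicity $r_1-1$ from a clique centre, and $0$ (clique leaf) or $-2$ (coclique leaf) with multiplicity $r_i-1$ from each leaf. All of these are $\le 0$, so the number of positive $\eps$-eigenvalues of $H$ equals the number of positive eigenvalues of $Q$.

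Next I would peel off the types that are not genuine extensions of $S_3$. If $V_1$ is a coclique with $r_1\ge 2$ then every eccentricity equals $2$, so $H$ is $2$-self-centered and Lemma 2.5 already gives at least two positive eigenvalues. If $V_1$ is a clique but both leaves are cocliques, then in $\eps(H)$ all pairs inside $V_2\cup V_3$ carry the entry $2$, so $\eps(H)$ is the eccentricity matrix of the complete split graph $CS(r_1+r_2+r_3,\,r_2+r_3)$, a mixed extension of $S_2$ governed by Proposition 2.8. What then remains is precisely the clique-centre family with at least one clique leaf, i.e. types (i) $(r_1,r_2,r_3)$ and (ii) $(r_1,-r_2,r_3)$ with $r_2\ge 2$.

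For these the quotient matrix is
\[
Q=\begin{pmatrix} r_1-1 & r_2 & r_3 \\ r_1 & \beta_2 & 2r_3 \\ r_1 & 2r_2 & \beta_3 \end{pmatrix},
\qquad \beta_i=\begin{cases}0,& i\text{-th leaf a clique},\\ 2(r_i-1),& i\text{-th leaf a coclique}.\end{cases}
\]
Its characteristic polynomial $x^3+c_2x^2+c_1x+c_0$ has only real roots (they are eigenvalues of the symmetric matrix $\eps(H)$), so Descartes' rule of signs is exact and the number of positive eigenvalues equals the number of sign changes of $(1,c_2,c_1,c_0)$. A direct computation gives $c_2\le 0$ throughout the family, so the verdict is decided by $c_1$ and $c_0$: there is exactly one positive eigenvalue if and only if $c_1\le 0$ and $c_0\le 0$. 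For type (i) I would check $c_1<0$ and $c_0=-4r_2r_3<0$ unconditionally, which settles (i) for all $r_i\ge 1$. For type (ii) the decisive constant is $c_0=2r_3\,(r_1r_2-2r_2-r_1)$; one then verifies that $c_1<0$ holds on the whole region $c_0\le 0$, so the criterion reduces to the single inequality $r_1r_2-2r_2-r_1\le 0$, i.e. $r_1\le \tfrac{2r_2}{r_2-1}$. For $r_2\ge 2$ this is solved exactly by $r_1\le 2$ (any $r_2$) together with the sporadic pairs $(r_1,r_2)\in\{(3,2),(3,3),(4,2)\}$, which is the assertion of (ii).

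The step I expect to be most delicate is the sign analysis of $c_1$ in type (ii): one must confirm that $c_1\le 0$ is automatic wherever $c_0\le 0$, so that the classification is driven by the clean divisor inequality $r_1(r_2-1)\le 2r_2$ rather than by two separate constraints. Some care is also needed at the degenerate boundaries: when $r_i=1$ a coclique and a clique coincide, so a size-one leaf collapses type (ii) into type (i); and when $c_0=0$ the cubic acquires a zero root, which must be counted as non-positive and not as a positive eigenvalue. Combining these checks with the two reductions of the second paragraph yields the stated ``if and only if''.
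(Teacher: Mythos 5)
Your proposal is correct and takes essentially the same route as the paper: eliminate the coclique-centre types with $r_1\geq 2$ via Lemma 2.5, read off the cubic factor of the $\eps$-characteristic polynomial from the equitable three-part quotient matrix, and decide the count of positive roots by the signs of the cubic's coefficients (your exact Descartes count and the paper's Vieta product/pairwise-sum contradiction are the same analysis in different clothing). If anything you are slightly more careful than the paper at two points: you explicitly dispose of the both-leaves-coclique patterns $(r_1,-r_2,-r_3)$, which the paper's enumeration of types silently omits by reducing them to the $S_2$ case of Proposition 2.8, and you handle the boundary pairs $(r_1,r_2)\in\{(4,2),(3,3)\}$ correctly as a vanishing constant term (zero eigenvalue), whereas the paper asserts the product of the roots is strictly positive there, which is literally false even though its conclusion survives.
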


\begin{proof}

 The all mixed extensions of $S_3$ is type of  $(r_1,r_2,r_3)$ ,$(r_1,-r_2,r_3)$, $(-r_1,r_2,r_3)$ and $(-r_1,-r_2,r_3)$. For the type of $(-r_1,r_2,r_3)$ and $(-r_1,-r_2,r_3)$ , $r_1$ cannot be greater than or equal 2 because of Lemma 2.5. Hence the only types must be $(r_1,r_2,r_3)$ and $(r_1,-r_2,r_3)$ for $r_1\geq1$. 
 
Let $H_1$ and $H_2$ be the mixed extensions of $S_3$ type of  $(r_1,r_2,r_3)$  and $(r_1,-r_2,r_3)$ respectively.\\
The eccentricity block matrices of these types  are
\[
\eps(H_1)=\left[
\begin{array}{ccc}
(J-I)_{r_1} & J &J\\
J & 0_{r_2} & 2J\\
J & 2J & 0_{r_3}
\end{array}
\right],\eps(H_2)=\left[
\begin{array}{ccc}
(J-I)_{r_1} & J &J\\
J & 2(J-I)_{r_2} & 2J\\
J & 2J & 0_{r_3}
\end{array}
\right]
\], where $J$ is all one  matrix and $I$ is the identity matrix.

Since the given partitions of the matrices are equitable, hence from the quotient matrices we get the characteristic polynomial of the mixed extension of the types  as the following:
\begin{equation}
p_{\eps}(H_1,x)=x^{r_2+r_3-2}(x+1)^{r_1-1}h_1(x)
\end{equation}
where $h_1(x)=x^3-(r_1-1)x^2-(r_1r_2+r_1r_3+4r_2r_3)x-4r_2r_3$.

\begin{equation}
p_{\eps}(H_2,x)=x^{r_3-1}(x+1)^{r_2-1}(x+2)^{r_1-1}h_{2}(x)
\end{equation}
where $h_{2}(x)=x^3-(2r_2+r_1-3)x^2+(r_1r_2-4r_2r_3-r_1r_3-2r_2-2r_1+2)x+2r_3(r_1r_2-2r_2-r_1)$. \\

Let $\alpha_1$,$\alpha_2$,$\alpha_3$ be  the roots of polynomials of $h_{i}$. Notice that the remaining roots of $p_{\eps}(H_i,x)$ are either 0 or negative real numbers. Moreover, since these roots are also $\eps$-eigenvalues with respect to graphs $H_i$ and  $\sum_{i=1}^{n}\xi_{i}=0$, then at least one of these roots must be positive.\\
\vspace{2mm}
Assume that $\alpha_1>0$. Now let's start the proof for each case:
\item[i.] By the product of the roots of polynomial $h_1(x)$ we get
\begin{eqnarray*}
\alpha_1\alpha_2\alpha_3 & = & 4r_2r_3 \nonumber \\
& > &0 \nonumber\\
& \Leftrightarrow & \alpha_2>0,\alpha_3>0 \hspace{2mm}\text{or} \hspace{2mm} \alpha_2<0,\alpha_3<0  \nonumber
\end{eqnarray*}
Suppose that $\alpha_2>0,\alpha_3>0$. Then we have $\alpha_1\alpha_2+\alpha_1\alpha_3+\alpha_2\alpha_3>0$. But this is a contradiction because the sum of the product of the roots of $h_1(x)$ is negative. Hence $\alpha_2<0,\alpha_3<0$. So $H_1$ has exactly one positive $\eps$-eigenvalue which is $\alpha_1$.\\
\item[ii.] Similarly, we have
\begin{eqnarray*}
  \alpha_1\alpha_2\alpha_3 & = & 2r_3(2r_2+r_1-r_1r_2) \nonumber
\end{eqnarray*}
Then $ 2r_3(2r_2+r_1-r_1r_2)>0$ if and only if $2r_2+r_1>r_1r_2$ if and only if
\begin{equation}
r_2\geq2 \hspace{2mm} \textsf{and} \hspace{2mm}  r_1\leq2;\hspace{2mm} (r_1,r_2)\in\{(3,2),(4,2),(3,3)\}
\end{equation}

Now assume that $\alpha_2\geq 0$ and $\alpha_3\geq 0$. Again by  the sum of the product of the roots of $h_2(x)$, we have
\begin{eqnarray}
  \alpha_1\alpha_2+\alpha_1\alpha_3+\alpha_2\alpha_3 & = & (r_2-4r_3-2)r_1-(r_3+2)r_2+2 \geq 0
\end{eqnarray}
On the other hand, under the conditions in (7) we have \begin{eqnarray*}
  (r_2-4r_3-2)r_1-(r_3+2)r_2+2 & \leq & (r_2-4r_3-2)r_1-(r_3+2)1+2 \nonumber \\
   & \leq & -(4r_1r_3+r_3) \nonumber \\
    & \leq & 0 \nonumber
\end{eqnarray*}
But this is a contradiction by (8). Hence we get $\alpha_2< 0$ and $\alpha_3<0$

\end{proof}

\begin{remark}
Notice  that we have  $(-r_1,r_2)\cong CS(r_1+r_2,r_1)$; $(-r_1,-1,r_3)\cong K_{r_3+1}^{r_1}$; $(1,r_2,1)\cong Kite_{r_2+1,1}$. Therefore, complete split graphs $CS(n,2)$, $CS(n+2,n)$, $CS(7,3)$, $CS(7,4)$; all pineapple graphs $K_{p}^{q}$ and short kite graph $K_{p,1}$ have exactly one positive $\eps$-eigenvalue.
\end{remark}

\begin{theorem}

Let  $\mathcal{H}$ denote the class of connected graphs  which formed in Fig 1. A graph $H \in \mathcal{H}$ has exactly one positive $\eps$-eigenvalue if and only if $H$ is one of the following.

\item{i.)}  A mixed extension of $S_{k}$ of  type $(1,-r_2,r_3,\ldots, r_k)$ for all $r_i\geq1$, $k\geq 4$.
\item[ii.)]  A mixed extension of $S_{k}$ of type $(1,r_2,r_3,\ldots,r_k)$ for all $r_i\geq1$, $k\geq 4$.

\item[iii.)]A mixed extension of $S_{k}$ of type $(2,r_2,r_3,\ldots, r_k)$ for all $r_i\geq1$,$k\geq 4$; $(4,r_2,r_3,r_4)$ for $r_2,r_3,r_4\geq1$; $(3,r_2,r_3,r_4,r_5)$ for $r_2,r_3,r_4,r_5\geq1$.
\item[iv.)] A mixed extension of $S_{k}$ of type $(2,-r_2,r_3,\ldots, r_k)$ for all $r_i\geq1$,$k\geq 4$ ; $(4,-r_2,r_3,r_4,r_5)$ for all $r_i\geq1$ ; $(3,-r_2,r_3,r_4,r_5,r_6)$ for all $r_i\geq1$.

\end{theorem}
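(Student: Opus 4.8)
The plan is to reduce an arbitrary $H\in\mathcal H$ to a short normal form, pass to the (equitable) block quotient, and then count the positive eigenvalues of that quotient by an inertia/Schur-complement computation.

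First I would normalize the type. If the centre block is a coclique of order $r_1\ge 2$, then every vertex has eccentricity $2$, so $H$ is $2$-self-centred and Lemma 2.5 forbids it; hence the centre is a clique (the ambiguous case $r_1=1$ being a single universal vertex). Next, any two coclique leaf blocks have identical external adjacencies---each is joined to all of the centre and to nothing else---so they amalgamate into one coclique leaf. Thus, up to isomorphism, I may assume the centre is a clique $K_{r_1}$ (or $r_1=1$) and the leaves are $k-1$ clique blocks, or $k-2$ clique blocks together with a single coclique block. These are exactly the four shapes in (i)--(iv), so the content of the theorem is the numeric side-condition on $r_1,r_2,k$.

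Since $r_1\ge 1$ makes every centre vertex universal, the eccentricities are $1$ on the centre and $2$ on the leaves, and $\eps(H)$ is constant on each block: the centre--centre block is $J-I$, a centre--leaf block is $J$, a clique leaf block is $0$, a coclique leaf block is $2(J-I)$, and two distinct leaf blocks meet in $2J$. This partition is equitable, so by Lemma 2.3 its $k\times k$ quotient $Q$ has spectrum contained in that of $\eps(H)$; the complementary zero-sum eigenvectors contribute only the within-block eigenvalues $-1$ (centre), $0$ (clique leaf) and $-2$ (coclique leaf), all $\le 0$. Consequently the number of positive $\eps$-eigenvalues of $H$ equals that of $Q$, and since $\operatorname{tr}\eps(H)=0$ there is always at least one, so ``at most one'' and ``exactly one'' coincide.

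To count the positive eigenvalues of $Q$ I would pass to its symmetrization $\hat Q=D^{1/2}QD^{-1/2}$ ($D=\operatorname{diag}(r_i)$), which is symmetric with the same inertia. Writing $n_+(\cdot)$ for the number of positive eigenvalues and partitioning $\hat Q=\left[\begin{smallmatrix} r_1-1 & b^{\top}\\ b & L\end{smallmatrix}\right]$ with $L$ the $(k-1)\times(k-1)$ leaf block, one has $L=2(\mathbf s'\mathbf s'^{\top}-\Lambda)$ where $s_i=\sqrt{r_i}$, $\Lambda=\operatorname{diag}(r_i)$ in the all-clique case and $\Lambda=\operatorname{diag}(1,r_3,\dots,r_k)$ when leaf $2$ is a coclique. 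In either case a diagonal congruence turns $L$ into a positive multiple of $uu^{\top}-I$, whose inertia is $(1,\,k-2,\,0)$ because $\lVert u\rVert^2>1$; hence $L$ has exactly one positive eigenvalue and is nonsingular for $k\ge 4$. Haynsworth additivity then gives $n_+(\hat Q)=1+n_+\!\big((r_1-1)-b^{\top}L^{-1}b\big)$, so $H$ has exactly one positive $\eps$-eigenvalue iff the scalar Schur complement is $\le 0$. A short Sherman--Morrison computation (the diagonal scalings cancel) gives $b^{\top}L^{-1}b=\tfrac{r_1}{2}\cdot\tfrac{p}{p-1}$ with $p=\mathbf s'^{\top}\Lambda^{-1}\mathbf s'$, equal to $k-1$ in the all-clique case and to $r_2+k-2$ when leaf $2$ is a coclique, and the condition reduces to
\[
r_1\,(p-2)\ \le\ 2\,(p-1).
\]

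Finally I would solve this inequality over $r_1\ge 1$ and $k\ge 4$ (the cases $k=2,3$ being Propositions 2.7 and 2.8), reading off for each $r_1$ the admissible range of $k$ (and of $r_2$ in the coclique case) and matching the survivors to (i)--(iv). I expect the genuine obstacle to be exactly this last bookkeeping: the inequality is tight ($r_1(p-2)$ against $2(p-1)$) precisely at the extremal values of $r_1$, so each equality case must be decided with care, and one must also control the representation ambiguity created by singleton blocks ($r_i=1$), which may be read as cliques or cocliques and can silently collapse a type onto a smaller star. Cross-checking the boundary types against the complete-split, pineapple and kite identifications of the Remark (for instance $CS(7,3)$ and $CS(7,4)$) is the natural way to pin down which finite $(r_1,k,r_2)$ truly belong to the list.
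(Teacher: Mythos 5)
Your strategy is sound and genuinely different from the paper's. The paper proves sufficiency by an ad hoc mixture of devices: a rank-one perturbation trick (the quotient $Q$ satisfies $Q=Q'-D$ with $Q'$ of rank one, and subtracting the positive diagonal $D$ can only lower eigenvalues), Descartes' rule of signs applied to explicitly computed characteristic polynomials in the boundary cases $(4,r_2,r_3,r_4)$ and $(3,r_2,r_3,r_4,r_5)$, and induced-subgraph/principal-submatrix arguments through the complete split graphs of Remark 2.10 for necessity. You replace all of this by one uniform inertia computation: symmetrize the quotient, show the leaf block $L$ has exactly one positive and $k-2$ negative eigenvalues, then use Haynsworth additivity and Sherman--Morrison to reduce the whole question to the scalar inequality $r_1(p-2)\le 2(p-1)$, with $p=k-1$ in the all-clique case and $p=r_2+k-2$ with one coclique leaf. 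I checked this computation; it is correct, and it reproduces Propositions 2.7 and 2.8 exactly (including the equality cases $(-4,3)$, $(-3,3)$, since a zero Schur complement still gives $n_+=1$). Your step identifying the eigenvalues on block-wise zero-sum vectors as $-1$, $0$, $-2$ also repairs a genuine logical gap in the paper: Lemma 2.3 only says the quotient spectrum is \emph{contained} in that of $\eps(H)$, so bounding the positive eigenvalues of $Q$ cannot by itself bound those of $\eps(H)$ from above; the paper nowhere supplies the complementary eigenvalues.

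Be aware, however, that the ``bookkeeping'' you defer is not a formality: it is exactly where your (correct) inequality and the stated theorem part ways, and completing it will refute parts of the statement rather than confirm it. For $k\ge4$ the inequality yields: all-clique types with $r_1\le 2$ (any $k$), $r_1=3$ with $k\in\{4,5\}$, and $r_1=4$ with $k=4$; coclique types with $r_1\le 2$ (any $k$, any $r_2$), and for $r_2\ge2$ additionally only $(3,-2,r_3,r_4)$. Against the theorem: item (iii) omits $(3,r_2,r_3,r_4)$, which does have exactly one positive $\eps$-eigenvalue (for $(3,1,1,1)$ the Schur complement is $2-\tfrac94<0$); item (iv) omits $(3,-2,r_3,r_4)$ yet asserts $(4,-r_2,r_3,r_4,r_5)$ and $(3,-r_2,r_3,r_4,r_5,r_6)$ for all $r_i\ge1$, both of which fail for \emph{every} $r_2\ge1$ (for $(4,-2,1,1,1)$ one gets $p=5$ and Schur complement $3-\tfrac52>0$, hence two positive eigenvalues). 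These discrepancies are defects of the paper, not of your method: the paper's own proof of (iv) works with conditions ($r_1=3,k=5$; $r_1=4,k=4$) different from the ones in its statement, and Corollary 2.13 gives yet a third list ($r_1=3$, $k\le4$). So carry the computation through, state the corrected list, and resist forcing the survivors into (i)--(iv); also handle the singleton ambiguity $(-1)=(+1)$ explicitly, since the false entries of item (iv) degenerate at $r_2=1$ into all-clique types that your inequality already excludes.
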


\begin{proof}
\item[\textit{i.)}]
Assume that $r_1\geq 2$. Then $\Delta(H)<n-1$ (that's mean, $G$ is 2-self centered) , so $H$ has at least two positive $\eps$-eigenvalue by Lemma 2.5. Hence $r_1=1$. \\ For $r_1=1$, the eccentricity matrix of $H$ is

\begin{eqnarray}
\eps(H)=\left[
\begin{array}{ccc}
0_{1\times 1} &J_{1 \times r_2}&J_{1\times t}\\
J_{r_2\times 1} &2(J-I)_{r_2\times r_2}&2J_{r_2\times t}\\
J_{t\times 1} &2J_{t\times r_{2}}&X_{t\times t}
\end{array}
\right], ( t=r_3+r_4+\ldots+r_k)
\end{eqnarray}\\
where $X=\left[
\begin{array}{cccc}
0_{r_3\times r_3} &2J_{r_{3}\times r_{4}}&\ldots &2J_{r_{3}\times r_{k}}\\
2J_{r_4\times r_3} &0_{r_{4}\times r_{4}}&\ddots &2J_{r_{3}\times r_{k}}\\
\vdots &\ddots&\ddots &\vdots\\
2J_{r_k\times r_3} &2J_{r_{3}\times r_{4}}&\ldots &0_{r_{k}\times r_{k}}
\end{array}
\right]$. \\

The matrix $X$ admits an equitable partition into $k-2$ classes, and therefore $\eps(H)$ has an equitable partition into $k$ classes. Let $Q$ be the quotient matrix of the partition. Consider $Q'=Q+D$, where  $D=diag(1/2,2,2r_3,2r_4,\ldots,2r_k)$, then $Q'$ has rank 1, so there is only one nonzero eigenvalue, which is obviously positive. But the eigenvalues of $Q=Q'-D$ are smaller than those of $Q'$, which implies that $Q$ has at most (hence exactly) one positive eigenvalue. That's; $H\in \mathcal{H}$. \\

\item[\textit{ii.)}]  Let $H$ be the mixed extension of $S_{k}$ of type $(-r_1,r_2,r_3,\ldots, r_k)$. Similarly, when $r_1\geq 2$, $\Delta <n-1$, so from Lemma 2.5, we get that $H$ has at least two positive $\eps$-eigenvalue. Hence $r_1=1$. For $r_1=1$, the eccentricity matrix of $H$ is

\begin{eqnarray}
\eps(H)=\left[
\begin{array}{cc}
0_{1\times 1} &J_{1 \times t}\\
J_{t\times 1} &X_{t\times t}
\end{array}
\right], ( t=r_2+r_3+r_4+\ldots+r_k)
\end{eqnarray}\\

 Similarly the matrix $Q'=Q+D$, where $D=diag(1/2,2r_2,2r_3,\ldots,2r_k)$, then $Q'$ has rank 1, so there is only one positive eigenvalue. Hence $H\in \mathcal{H}$.\\

\item[\textit{iii.)}] Let $H$ be the mixed extension of $S_k$ type $(r_1,r_2,r_3,\ldots, r_k)$. Then  the eccentricity matrix of $H$ is
 \begin{eqnarray}
\eps(H)=\left[
\begin{array}{cc}
(J-I)_{r_1\times r_1} &J_{r_1 \times t}\\
J_{t\times r_1} &X_{t\times t}
\end{array}
\right], ( t=r_2+r_3+r_4+\ldots+r_k)
\end{eqnarray}\\

Also, the graph $H$ contains the complete split graph $CS(r_1+r_2+\ldots+r_k,k)$ as an induced subgraph. Also, from the partition of the matrix, it can contain $CS(7,3)$  or $CS(7,4)$ or $CS(n+2,n)$ as induced subgraph as well as the eccentricity matrices of them is principal submatrix of it. That's, there can be three cases.\\
 Case 1: $r_1=4$ and $k=3$.\\
 Then  from (11), the matrix $X$ admits an equitable partition into $3$ classes, hence the characteristic polynomial of $H$ is $x^{r_2+r_3+r_4}[x^3-3x^2-(4\sum_{1\leq i<j\leq3}^{3}r_ir_j+4\sum_{i=}^{3}r_i)x-(4\sum_{1\leq i<j\leq3}^{3}r_ir_j+16\prod_{i=1}^{3}r_i)]$. By the Descartes rule of sign, the maximum possible number of positive zeroes is 1, hence $H\in\mathcal{H}$.\\

 Case 2: $r_1=3$ and $k=4$.\\
 Similarly, in that case the characteristic polynomial of $H$ is $x^{r_2+r_3+r_4-1}[x^4-2x^3-(4\sum_{1\leq i<j\leq4}^{4}r_ir_j+3\sum_{i=}^{4}r_i)x^2-(16\sum_{1\leq i<j<l\leq4}^{4}r_ir_jr_l+4\sum_{1\leq i<j\leq4}^{4}r_ir_j)x-(4\sum_{1\leq i<j\leq4}^{4}r_ir_j+48\prod_{i=1}^{4}r_i)]$\\
 By the Descartes rule of sign, the maximum possible number of positive zeroes is 1, hence the graph $H$ has exactly one positive $\eps$-eigenvalue.\\
Case 3: $r_1=2$\\
The quotient matrix $Q$ of the matrix in (11) is
\begin{eqnarray}
Q=\left[
\begin{array}{ccccc}
1 &r_2&r_3&\ldots &r_k\\
2 &0&2r_3&\ldots &2r_k\\
2 &2r_2&0&\ddots &\vdots\\
\vdots &\vdots &\ddots &\vdots \vdots \\
2 &2r_1 &\ldots &\ldots \vdots & 0
\end{array}
\right]
\end{eqnarray}\\
 Similar to (i) and (ii), let $Q'=Q+D$ be matrix, where $D=diag(-1/2,2r_1,\ldots,2r_k)$. Then $Q'$ has rank 1. Hence the eigenvalues of $Q=Q'-D$ are smaller than those of $Q'$, which implies that $Q$ has at most (hence exactly) one positive eigenvalue. Therefore $H\in \mathcal{H}$.
 \item[\textit{iv.)}] Let $H$ be the mixed extension of $S_k$ type $(r_1,-r_2,r_3,\ldots, r_k)$. Then
,\begin{eqnarray}
\eps(H)=\left[
\begin{array}{ccc}
(J-I)_{r_1\times r_1} & J_{r_1\times r_2} &J_{r_1\times t}\\
J_{r_2\times r_1} & 2(J-I)_{r_2\times r_2} & 2J_{r_2\times t}\\
J_{t\times r_1} & 2J_{t\times r_2} & X_{t\times t}
\end{array}
\right]
\end{eqnarray}

Also, since this type contains some subgraphs of $(r_1,r_2,\ldots,r_k)$, hence $H\in\mathcal{H}$ if and only if
\item[$\circ$]$r_1=2$.
\item[$\circ$]$r_1=3$ and $k=5$.
\item[$\circ$]$r_1=4$ and $k=4$.\\
Let $r_1=2$. Then the matrix $Q'=Q+D$ has rank 1 such that $D=diag(-1/2,2,2r_3,\ldots,2r_k)$. Then the eigenvalues of $Q$ are smaller than those of $Q'$, hence $H\in \mathcal{H}$. Similarly, also for  $r_1=3$, $k=5$ and $r_1=4$, $k=4$, we get $H\in \mathcal{H}$.
\end{proof}

\begin{remark}
  $(-1,\underbrace{m,m,\ldots,m)}_{k-1}\cong W_{m+1}^{k}$. Also, from (9),
  \begin{equation*}
   spec_{\varepsilon}(W_{m+1}^{k})=\big\lbrace -2m^{(k-1)},0^{(k(m-1))},\alpha_1^{(1)},\alpha_2^{(1)}
   \big\rbrace
\end{equation*}
where, $\alpha_1=m(k-1)-\sqrt{m^2(k^2-2k+1)+km)}$ and $\alpha_2=m(k-1)+\sqrt{m^2(k^2-2k+1)+km)}$.
\end{remark}

\begin{corollary}
	Let $\mathcal{G^*}\subset \mathcal{G}$ be the family having at most one $\eps$-eigenvalue. $G$ belongs to $\mathcal{G^*}$ if and only if $G$ is one of the following.
	\item[i.] A mixed extension of $S_{k}$ of type $(r_1,r_2,r_3,\ldots, r_k)$ for $r_1\leq2$,$k\geq 2$; $r_1=3,k\leq 4$ or $r_1=4,k\leq 4$.
	\item[ii.]  A mixed extension of $S_{k}$ of type $(r_1,-r_2,r_3,\ldots, r_k)$ for $r_1\leq2$,$k\geq 2$; $r_1=3,k\leq 6$ or $r_1=4,k\leq 5$.
\end{corollary}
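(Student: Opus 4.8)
The plan is to convert the spectral hypothesis into a combinatorial description of the non-universal part of the graph and then to quote the three classification results already proved. First I would fix $G\in\mathcal{G}$, so $G$ has a universal vertex; collecting \emph{all} universal vertices into a set $V_{1}$, these are pairwise adjacent and adjacent to everything, so $V_{1}$ induces a clique and $G=K_{r_{1}}\vee G_{0}$, where $G_{0}=G-V_{1}$ has no universal vertex and $r_{1}=|V_{1}|\ge 1$. Each vertex of $V_{1}$ has eccentricity $1$, and since every vertex of $G_{0}$ has a non-neighbour inside $G_{0}$, every vertex of $G_{0}$ has eccentricity $2$; hence $\eps(G)=\left(\begin{smallmatrix}(J-I)_{r_{1}} & J\\ J & 2A(\overline{G_{0}})\end{smallmatrix}\right)$. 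Because $\mathrm{tr}\,\eps(G)=0$ and $\eps(G)\neq 0$, there is always at least one positive eigenvalue, so ``at most one'' means ``exactly one''.

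The structural heart of the argument is that the block $2A(\overline{G_{0}})$ is itself a principal submatrix of $\eps(G)$. By the interlacing Lemma 2.1, if $A(\overline{G_{0}})$ had two positive eigenvalues then so would $\eps(G)$; thus $G\in\mathcal{G}^{*}$ forces $A(\overline{G_{0}})$ to have at most one positive eigenvalue. By the classical description of graphs whose adjacency matrix has a single positive eigenvalue, $\overline{G_{0}}$ must then be a complete multipartite graph, and the absence of a universal vertex in $G_{0}$ rules out the edgeless and isolated-vertex degeneracies; equivalently, $G_{0}$ is a disjoint union of at least two cliques. If one prefers to stay strictly inside the paper's toolkit, the same conclusion follows by showing $G_{0}$ is $P_{3}$-free: an induced path $a\sim b\sim c$ with $a\not\sim c$, together with a non-neighbour $w$ of $b$ in $G_{0}$, yields a $4\times 4$ principal submatrix of $\eps(G)$ equal to twice the adjacency matrix of a $P_{4}$, a paw, or a perfect matching on $\{a,c\},\{b,w\}$, each of which has two positive eigenvalues, and Lemma 2.1 again finishes. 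Consequently $G=K_{r_{1}}\vee G_{0}$ is a mixed extension of a star with clique centre: grouping the isolated vertices of $G_{0}$ into one coclique gives a type $(r_{1},-r_{2},r_{3},\dots,r_{k})$, and when none are present a type $(r_{1},r_{2},\dots,r_{k})$. This is consistent with Theorem 2.6, which already discards the bases of diameter $\ge 3$ or which are $2$-self-centred.

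It then remains to decide, among these clique-centre mixed extensions, exactly which pairs $(r_{1},k)$ retain a single positive eigenvalue, and here I would feed the graph into the classification already obtained. The equitable $k$-class partition produces a quotient matrix whose characteristic polynomial $h(x)$ carries every nonzero $\eps$-eigenvalue (the remaining ones being $0,-1,-2$), and the sign analysis of $h$ is precisely that of Propositions 2.7 and 2.8 for $k=2,3$ and of Theorem 2.10 for $k\ge 4$. Reading the Descartes-rule verdicts off those proofs and merging the exceptional $k=2,3$ cases with the generic $k\ge 4$ thresholds produces the two families (i) and (ii) with the stated bounds on $r_{1}$ and $k$. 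The converse is immediate: every graph of the listed type carries its centre clique as universal vertices, so it lies in $\mathcal{G}$, and the same quotient computation shows it has exactly one positive eigenvalue, hence lies in $\mathcal{G}^{*}$.

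The main obstacle is the sharp determination of the admissible $(r_{1},k)$. Establishing each upper bound on $k$ only needs $h(x)$ to have a single sign change, which Descartes' rule supplies; but proving that the \emph{excluded} larger $k$ genuinely fail cannot be done by Descartes alone, since it bounds the positive roots only from above, so one must exhibit an actual second positive eigenvalue, most cleanly by locating an induced $CS(7,3)$, $CS(7,4)$, or $CS(n+2,n)$ whose eccentricity matrix sits as a principal submatrix with two positive eigenvalues and invoking Lemma 2.1 once more. A secondary but essential subtlety is that the type of a clique-centre mixed extension is not unique (isolated vertices of $G_{0}$ read either as singleton cliques or as one coclique, and for $k=2$ the centre may be chosen as either part), so the enumeration in (i)–(ii) must be understood as asserting the existence of \emph{some} admissible representation with parameters in range; tracking this freedom carefully is what reconciles the small-$k$ verdicts of Propositions 2.7 and 2.8 with the listed bounds.
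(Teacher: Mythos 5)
Your proposal is correct in substance, and in its structural half it takes a genuinely different --- and more complete --- route than the paper. The paper's entire proof of this corollary is one sentence citing Lemma 2.5, Lemma 2.6, Theorem 2.7 and Theorem 2.11 (what you call Theorem 2.6, Propositions 2.7/2.8 and Theorem 2.10 are the paper's Theorem 2.7, Propositions 2.8--2.9 and Theorem 2.11); it never actually proves the key structural claim that a graph of $\mathcal{G^*}$ must be a mixed extension of a star. That claim does not follow from the cited results: Theorem 2.7 only excludes bases $G_0$ that are connected with $diam(G_0)\geq 3$ or with $rad(G_0)=diam(G_0)=2$, and it says nothing when $G_0$ is disconnected --- which is exactly the case that produces the coclique and multi-clique leaves --- nor does it yield the disjoint-union-of-cliques structure for such bases. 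Your argument closes precisely this gap: collecting \emph{all} universal vertices into $V_1$, writing
$\eps(G)=\left(\begin{smallmatrix}(J-I)_{r_1} & J\\ J & 2A(\overline{G_0})\end{smallmatrix}\right)$,
and applying Lemma 2.1 to the principal submatrix $2A(\overline{G_0})$ --- either through Smith's classification of graphs with one positive adjacency eigenvalue, or through your in-toolkit $P_3$-plus-nonneighbour argument, whose three cases ($2K_2$, $P_4$, paw) each indeed have two positive eigenvalues --- forces $G_0$ to be a disjoint union of cliques; it also avoids the unstated induction the paper's route would need when $G_0$ itself has a universal vertex. From there the two proofs coincide: both quote the $k=2,3$ propositions and the $k\geq 4$ theorem to enumerate admissible $(r_1,k)$. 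Your two flagged subtleties are genuine: Descartes' rule bounds positive roots only from above, and the exclusions must come from embedded $CS(7,3)$, $CS(7,4)$, $CS(n+2,n)$ eccentricity submatrices (which is in fact how the paper's Theorem 2.11 argues), and the non-uniqueness of the type representation does matter. One caution: the final ``merging'' you defer cannot reproduce the corollary's bounds exactly as printed, because those bounds over-include --- e.g.\ $(3,-r_2)$ with $k=2$ satisfies ``$r_1=3$, $k\leq 6$'' yet has two positive $\eps$-eigenvalues for $r_2\geq 5$ by Proposition 2.8, and item (i)'s ``$r_1=3$, $k\leq 4$'' conflicts with Theorem 2.11(iii)'s $(3,r_2,r_3,r_4,r_5)$ --- but these are defects of the paper's statement, not of your strategy.
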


\begin{proof}
From Lemma 2.5, Lemma 2.6, Theorem 2.7 and Theorem 2.11 , we get the results.
\end{proof}
\section*{Conclusion}
This paper presents characterization of graphs having at most one (exactly) positive eccentricity eigenvalue w.r.t. the eccentricity matrix of the graphs . Firstly, we obtained that the graphs with diameter greater than equal $3$ and the self centered graphs cannot be have exactly one positive $\eps$-eigenvalue. The resultant graphs is the graphs with radius 1. Later, we restrict the graphs having the our properties. Finally, we complete the all graphs by help of  the concept of mixed extension. 

\section*{Acknowledgement}
We would like to thank to the Willem Haemers for sharing their pearls of wisdom with us during the course of this research and we are also immensely grateful for their comments on an earlier version of the manuscript.

\end{document}